\numberwithin{equation}{section}
\theoremstyle{plain}
\newtheorem{thm}{Theorem}[section]
\newtheorem{prop}[thm]{Proposition}
\newtheorem{lem}[thm]{Lemma}
\newtheorem*{referthmA}{Theorem A}
\newtheorem*{referthmB}{Theorem B}
\newtheorem*{referthmC}{Theorem C}
\theoremstyle{definition}
\newtheorem{defn}[thm]{Definition}
\newtheorem{notation}[thm]{Notation}
\newcommand{\ichi}{\mathbf{1}}
\newcommand{\N}{\mathbb{N}}
\newcommand{\R}{\mathbb{R}}
\newcommand{\Z}{\mathbb{Z}}
\newcommand{\calK}{\mathcal{K}}
\newcommand{\calS}{\mathcal{S}}
\newcommand{\supp}{\mathrm{supp}\, }
\newcommand{\RomI}{\mathrm{I}}
\newcommand{\II}{\mathrm{I}\hspace{-0.5pt}\mathrm{I}}
\begin{document}

\title[Bilinear Fourier integral operators]
{Estimates for a certain bilinear Fourier integral operator} 

\author[T. Kato]{Tomoya Kato}
\author[A. Miyachi]{Akihiko Miyachi}
\author[N. Tomita]{Naohito Tomita}

\address[T. Kato]
{Division of Pure and Applied Science, 
Faculty of Science and Technology, Gunma University, 
Kiryu, Gunma 376-8515, Japan}

\address[A. Miyachi]
{Department of Mathematics, 
Tokyo Woman's Christian University, 
Zempukuji, Suginami-ku, Tokyo 167-8585, Japan}

\address[N. Tomita]
{Department of Mathematics, 
Graduate School of Science, Osaka University, 
Toyonaka, Osaka 560-0043, Japan}

\email[T. Kato]{t.katou@gunma-u.ac.jp}
\email[A. Miyachi]{miyachi@lab.twcu.ac.jp}
\email[N. Tomita]{tomita@math.sci.osaka-u.ac.jp}

\date{\today}

\keywords
{Bilinear Fourier integral operators}

\thanks
{This work was supported by JSPS KAKENHI, 
Grant Numbers 
20K14339 (Kato), 
20H01815 (Miyachi), and 
20K03700 (Tomita).}

\subjclass[2020]
{42B15, 
42B20}

\begin{abstract}
In this paper,
we consider the boundedness
from $H^{1} \times L^{\infty}$ to $L^{1}$ of
bilinear Fourier integral operators
with non-degenerate phase functions and 
amplitudes in $BS_{1,0}^{-n/2}$.
Our result gives an improvement of
Rodr\'iguez-L\'opez, Rule, and Staubach's result.
\end{abstract}

\maketitle

\section{Introduction}\label{intro}

The (linear) Fourier integral operator $T_{\sigma}^{\phi}$ 
treated in this paper
is the following form:
\begin{equation*} 
T_{\sigma}^{\phi} f (x)
= \int_{\R^n} e^{i \phi(x, \xi)} 
\sigma(x, \xi) \widehat{f} (\xi) \,d\xi ,
\quad x \in \R^n,
\end{equation*}
for
$f$ in the Schwartz class $\calS(\R^n)$,
where $\widehat{f}$ denotes the Fourier transform.
The function $\phi$ is called the {\it phase function},
usually assumed homogeneity and non-degeneracy conditions,
and the function $\sigma$ is called the {\it amplitude},
usually taken to be in some H\"ormander class
(see Definitions \ref{def-ND} and \ref{def-Sm10} below).
For the boundedness 
of the operator $T_{\sigma}^{\phi}$, 
we use the following terminology. 
Let $X$ and $Y$ be function spaces on $\R^n$ 
equipped with quasi-norms or seminorms 
$\|\cdot\|_{X}$ and $\|\cdot\|_{Y}$, respectively.
If there exists a constant $C$ such that
\begin{equation}\label{bdd-XY}
\|T_{\sigma}^{\phi} f\|_{Y}
\le C \|f\|_{X} 
\end{equation}
holds for all 
$f \in \calS \cap X$, 
then we say that 
$T_{\sigma}^{\phi}$ is bounded from 
$X$ to $Y$. 
The smallest constant $C$ of 
\eqref{bdd-XY} 
is denoted by 
$\|T_{\sigma}^{\phi}\|_{X \to Y}$.

In order to illustrate boundedness results
of the operator $T_{\sigma}^{\phi}$,
we recall the following definitions 
for phase functions and amplitudes.

\begin{defn} \label{def-ND} 
A function 
$\phi (x,\xi) \in C^{\infty} (\R^{n} \times (\R^{n} \setminus \{0\}))$ 
is called a non-degenerate phase function,
if it is real-valued,
positively homogeneous of degree one 
in the variable $\xi$, {\it i.e.},
\[
\phi (x, t \xi) = t \phi (x,\xi), 
\quad
t > 0, 
\;\; x \in \R^n , 
\;\; \xi \in \R^n \setminus\{0\} ,
\]
and satisfies the {\it non-degeneracy condition}
\[
\det \Big( \frac{\partial^2 \phi (x,\xi)}{\partial{x} \partial{\xi}} \Big) \neq 0
\]
for $x$ in the spatial support of the amplitude and 
for $\xi \in \R^n \setminus\{0\}$.
\end{defn}

\begin{defn} \label{def-Sm10} 
Let $m\in \R$ and $N \in \N_{0} = \N \cup \{0\}$. 
The class $S^{m}_{1,0} (N)$ 
is defined to be the set of all 
$C^{\infty}$ functions $\sigma = \sigma (x, \xi)$ 
on $\R^{2n}$ 
that satisfy the estimate 
\begin{equation*} 
\left|
\partial^{\alpha}_{x} 
\partial^{\beta}_{\xi} 
\sigma (x, \xi)
\right| 
\le 
C_{\alpha, \beta} 
\left( 1+ |\xi|\right)^{m-|\beta|}
\end{equation*}
for all multi-indices satisfying
$|\alpha|, |\beta| \le N$.
We define
\[
\| \sigma \|_{ S_{1,0}^{m} (N) }
= \max_{ |\alpha|, |\beta| \le N }
\left( \sup_{x, \xi \in \R^n}
\left( 1+ |\xi| \right)^{-(m-|\beta|)}
\big| 
\partial^{\alpha}_{x} 
\partial^{\beta}_{\xi} 
\sigma (x, \xi)
\big|
\right) .
\]
\end{defn}

Now, we recall the following theorem.

\begin{referthmA}
Let $\phi$ be a non-degenerate phase function 
and an amplitude $\sigma(x, \xi)$
be compactly supported in the spatial variable $x$.
Then, if $1 < p < \infty$ and $m = -(n-1) |1/p-1/2|$,
or if $1 \le p \le \infty$ and $m<-(n-1) |1/p-1/2|$,
there exist a positive constant $C$ 
and a positive integer $N$
such that 
\begin{align*}
\| T_{\sigma}^{\phi} \|_{L^p \to L^p}
\le C \| \sigma \|_{ S_{1,0}^{m} (N) }.
\end{align*}
\end{referthmA}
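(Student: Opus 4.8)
The plan is to establish the two endpoint estimates and obtain all intermediate exponents by interpolation. By duality it suffices to treat $1\le p\le 2$: the adjoint $(T_{\sigma}^{\phi})^{*}$ is again a non-degenerate Fourier integral operator whose amplitude lies in a class $S^{m}_{1,0}(N')$ of the same order, so $L^{p}$ boundedness for $1<p\le 2$ yields $L^{p'}$ boundedness for $2\le p'<\infty$. The two relevant endpoints are the $L^{2}$ bound at $m=0$ and the $H^{1}\to L^{1}$ bound at the critical order $m=-(n-1)/2$, and complex interpolation along the critical line then produces $m=-(n-1)|1/p-1/2|$ for each $1<p<\infty$. Throughout I fix a Littlewood--Paley partition $1=\sum_{j\ge 0}\psi_{j}$ with $\psi_{j}$ supported where $|\xi|\sim 2^{j}$ for $j\ge 1$, and write $\sigma_{j}=\sigma\,\psi_{j}$, $T_{j}=T_{\sigma_{j}}^{\phi}$; the low-frequency term $T_{0}$ has a smooth, compactly supported kernel and is harmless.

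For the $L^{2}$ endpoint I would run a $TT^{*}$ argument. The kernel of $T_{j}T_{j}^{*}$ is an oscillatory integral whose phase is $\phi(x,\xi)-\phi(x',\xi)$; the non-degeneracy condition $\det(\partial_{x}\partial_{\xi}\phi)\neq 0$ on the compact spatial support forces this phase to be non-stationary in $\xi$ once $|x-x'|\gtrsim 2^{-j}$, so repeated integration by parts gives rapid off-diagonal decay and a Schur-type bound $\|T_{j}\|_{L^{2}\to L^{2}}\lesssim \|\sigma\|_{S^{0}_{1,0}(N)}$ uniform in $j$. Combined with the almost-orthogonality of the frequency blocks, this yields $\|T_{\sigma}^{\phi}\|_{L^{2}\to L^{2}}\lesssim \|\sigma\|_{S^{0}_{1,0}(N)}$, which is the case $p=2$.

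The crux is the endpoint $H^{1}\to L^{1}$ at $m=-(n-1)/2$, and I expect this to be the main obstacle. Here I would introduce the second dyadic (C\'ordoba) decomposition: on the block $|\xi|\sim 2^{j}$ the sphere is split into $\sim 2^{j(n-1)/2}$ caps of angular width $2^{-j/2}$ about directions $\xi_{\nu}$, giving $\sigma_{j}=\sum_{\nu}\sigma_{j}^{\nu}$; this width is chosen so that the quadratic remainder of $\phi$ in the Taylor expansion about $\xi_{\nu}$ is $O(1)$ on each cap, since $\nabla_{\xi}^{2}\phi$ is homogeneous of degree $-1$. A direct estimate of the kernels then shows that the $L^{1}\to L^{1}$ norm of $T_{j}$ is bounded uniformly in $j$ but does not decay, which is exactly why one must pass to $H^{1}$. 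Testing on an atom $a$ supported in a ball $B$ of radius $r$ with $\int a=0$, I would split $T_{\sigma}^{\phi}a$ according to $|\xi|\lesssim r^{-1}$ and $|\xi|\gtrsim r^{-1}$. For the low frequencies the vanishing moment of $a$ supplies an extra factor and the $L^{2}$ bound closes the estimate; for the high frequencies I would separate the target space into the ``influence set'' $B^{*}$ determined by $x\mapsto\nabla_{\xi}\phi(x,\xi_{\nu})$, which has measure $\lesssim 1$ by the compact spatial support and on which Cauchy--Schwarz together with the $L^{2}$ bound applies, and its complement, on which non-stationarity of $\phi(x,\xi)-y\cdot\xi$ gives kernel decay. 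The choice $m=-(n-1)/2$ is precisely what balances the $2^{j(n+1)/2}$ volume of each cap against the gains from curvature and cancellation, so that $\|T_{\sigma}^{\phi}a\|_{L^{1}}\lesssim 1$ uniformly over atoms.

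Finally I would assemble the theorem. Embedding $\sigma$ in an analytic family $\sigma^{z}$ of amplitudes whose order $m(z)$ interpolates linearly between $0$ at $\mathrm{Re}\,z=0$ and $-(n-1)/2$ at $\mathrm{Re}\,z=1$, Stein's interpolation theorem for analytic families applied between $L^{2}\to L^{2}$ and $H^{1}\to L^{1}$ gives $L^{p}$ boundedness with $m=-(n-1)(1/p-1/2)$ for $1<p\le 2$, and duality extends this to $2\le p<\infty$; this is the critical-line case. For the strict case $m<-(n-1)|1/p-1/2|$, including the endpoints $p=1$ and $p=\infty$, the surplus order makes the dyadic sum $\sum_{j}2^{j(m+(n-1)/2)}$ converge, so the kernel estimates above give genuine $L^{1}\to L^{1}$ and $L^{\infty}\to L^{\infty}$ bounds directly, and interpolation fills in the intermediate exponents. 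This completes the proof.
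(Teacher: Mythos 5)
First, a point of comparison that matters for this review: the paper does not prove Theorem A at all. It is quoted as background, with the proof attributed to H\"ormander (the case $p=2$), Seeger--Sogge--Stein (the case $1<p<\infty$ at critical order), and Dos Santos Ferreira--Staubach (the case $m$ strictly below critical, including $p=1,\infty$). So your proposal can only be measured against those cited works, and at the structural level it reproduces their strategy faithfully: $L^2$ theory at order $0$ via $TT^{*}$, the second dyadic (cap) decomposition and an atomic $H^1\to L^1$ estimate at order $-(n-1)/2$, analytic interpolation along the critical line, duality for $p>2$, and direct kernel-plus-summation arguments in the sub-critical case. That is indeed the architecture of the cited proofs.

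However, at the step you yourself call the crux, your quantitative bookkeeping would not close. You propose to control $T^{\phi}_{\sigma}a$ on the influence set $B^{*}$ by Cauchy--Schwarz, asserting only $|B^{*}|\lesssim 1$. For an atom of radius $r\le 1$ one has $\|a\|_{L^2}\lesssim r^{-n/2}$, and even using the full gain of the critical order on the high frequencies $|\xi|\gtrsim r^{-1}$ one only gets $\|\sum_{2^{-j}\le r}T_j a\|_{L^2}\lesssim\sum_{2^{-j}\le r}2^{-j(n-1)/2}\,r^{-n/2}\approx r^{-1/2}$; hence $|B^{*}|^{1/2}\,r^{-1/2}$ blows up as $r\to 0$ unless $|B^{*}|\lesssim r$. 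This is exactly what the Seeger--Sogge--Stein construction provides and what your sketch is missing: the exceptional set is not the fixed compact spatial support but a union of roughly $r^{-(n-1)/2}$ rectangles of dimensions $r\times r^{1/2}\times\dots\times r^{1/2}$, adapted to the canonical relation through the flow-out $x\mapsto\nabla_{\xi}\phi(x,\xi_{\nu})$ of the atom's center, whose total measure is $\lesssim r$. The low-frequency half has the same defect: the moment condition plus the $L^2$ bound gives $\sum_{2^{-j}\ge r}2^{-j(n-1)/2}(2^{j}r)\,r^{-n/2}$, which diverges as $r\to0$ in every dimension $n\ge2$; there one needs the cap-based $L^1$ kernel estimates (each cap contributing $O(2^{jm})$ to $\int|\nabla_{y}K_j(x,y)|\,dx$, with $2^{j(n-1)/2}$ caps) rather than $L^2$. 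Two smaller points: the adjoint of an operator in the paper's reduced form (amplitude compactly supported in $x$) is not literally of the same form, so the duality step needs the general FIO framework or the equivalence-of-phase-functions theorem; and in the $L^2$ argument the orthogonality actually available is $T_jT_k^{*}=0$ for $|j-k|\ge2$ (disjoint $\xi$-supports), while $T_j^{*}T_k$ does not vanish, so the almost-orthogonality summation must be set up with that asymmetry in mind.
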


The first assertion was proved by
H\"ormander \cite{Hor1971} for $p=2$ and
Seeger--Sogge--Stein \cite{SSS} for $1 < p < \infty$,
and the second by
Dos Santos Ferreira--Staubach 
\cite[Theorems 2.1, 2.2, and 2.16]{DS}. 
It is worth mentioning that 
the number $-(n-1) |1/p-1/2|$
is optimal
(see \cite{M-wave}).

Based on the above result,
we shall consider a bilinear analogue.
The bilinear Fourier integral operator 
$T_{\sigma}^{\phi_1, \phi_2}$
considered in this paper
is defined by,
for non-degenerate phase functions $\phi_1$ and $\phi_2$
and for an amplitude $\sigma \in L^{\infty} (\R^{3n})$,
\begin{equation*}
T_{\sigma}^{\phi_1, \phi_2} (f, g) (x)
= 
\iint_{\R^n \times \R^n} 
e^{i \phi_1(x, \xi)+ i \phi_2 (x, \eta)} 
\sigma(x, \xi, \eta)
\widehat{f} (\xi) \widehat{g} (\eta) 
\,d\xi d\eta ,
\quad x \in \R^n,
\end{equation*}
for $f, g \in \calS(\R^n)$.
If $X$, $Y$, and $Z$ are function spaces on $\R^n$ 
equipped with quasi-norms or seminorms 
$\|\cdot\|_{X}$, $\|\cdot\|_{Y}$, and 
$\|\cdot\|_{Z}$, respectively,
and if there exists a constant $C$
such that
\begin{equation}\label{bdd-XYZ}
\|T_{\sigma}^{\phi_1, \phi_2} (f, g)\|_{Z}
\le C \|f\|_{X}\|g\|_{Y} 
\end{equation}
holds for all 
$f \in \calS \cap X$ and $g \in \calS \cap Y$, 
then we say that 
$T_{\sigma}^{\phi_1, \phi_2}$ is bounded from 
$X \times Y$ to $Z$. 
The smallest constant $C$ of 
\eqref{bdd-XYZ} 
is denoted by 
$\|T_{\sigma}^{\phi_1, \phi_2}\|_{X \times Y \to Z}$. 
For the functions spaces $X$, $Y$, and $Z$,
we consider the Lebesgue space $L^p$,
the Hardy space $H^p$,
the local Hardy space $h^p$, 
and the space $BMO$.
We note that
$H^p = h^p = L^p$ if $1<p\le \infty$ and 
$H^1 \hookrightarrow h^1 \hookrightarrow L^1$. 
For $H^p$ and $BMO$, see, 
{\it e.g.,} \cite[Chapters III and IV]{S}
and, for $h^p$, see \cite{Goldberg}.  
In the special case when
$\phi_{i}(x,\xi) = x \cdot \xi$, $i=1,2$,
the bilinear Fourier integral operator is called 
a bilinear pseudo-differential operator,
which is the operator originated by Coifman--Meyer \cite{CM},
and now vastly wide study has been done 
(see, {\it e.g.}, \cite{BBMNT, BT, GT}).

Now, 
in order to state the boundedness result 
for the bilinear operator $T_{\sigma}^{\phi_1, \phi_2}$,
we define the bilinear H\"ormander class as follows.

\begin{defn}\label{def-BSm10} 
For $m\in \R$, 
the class $BS^{m}_{1,0}$ is defined to be the set of all 
$C^{\infty}$ functions $\sigma = \sigma (x, \xi, \eta)$ 
on $\R^{3n}$ 
that satisfy the estimate 
\begin{equation*}
\left|
\partial^{\alpha}_{x} 
\partial^{\beta}_{\xi} 
\partial^{\gamma}_{\eta} 
\sigma (x, \xi, \eta)
\right| 
\le 
C_{\alpha, \beta, \gamma} 
\left( 1+ |\xi| + |\eta|\right)^{m-|\beta|-|\gamma|}
\end{equation*}
for all multi-indices $\alpha, \beta, \gamma \in (\N_0)^{n}$. 
\end{defn}

For this class, the following theorem holds.

\begin{referthmB}
Let $m \in \R$,
$n\ge 2$,  
$1\le p, q\le \infty$, and $1/p+1/q=1/r$.  
Suppose that 
$\phi_1$ and $\phi_2$ are non-degenerate phase functions
and an amplitude 
$\sigma \in BS^{m}_{1,0}$
is compactly supported in the spatial variable.
Then, if 
\[
m = m_{0}(p, q) := -(n-1) 
\Big( \Big|\frac{1}{p} - \frac{1}{2} \Big|
+\Big|\frac{1}{q} - \frac{1}{2} \Big| \Big) ,
\]
there exists a positive constant $C$ such that
\[
\|T_{\sigma}^{\phi_1, \phi_2}\|_{H^p \times H^q \to L^r}
\le C ,
\]
where $L^r$ should be replaced by $BMO$ when $r=\infty$. 
\end{referthmB}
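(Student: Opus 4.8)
The plan is to exploit the additive structure $m_0(p,q) = m_p + m_q$, where $m_p = -(n-1)|1/p - 1/2|$ and $m_q = -(n-1)|1/q - 1/2|$ are \emph{exactly} the critical linear orders of Theorem A, together with the Hölder relation $1/p + 1/q = 1/r$. If $\sigma$ were a tensor product $\sigma(x,\xi,\eta) = a(x,\xi)\,b(x,\eta)$ with $a \in S^{m_p}_{1,0}$ and $b \in S^{m_q}_{1,0}$, the operator would split as $T_\sigma^{\phi_1,\phi_2}(f,g) = (T_a^{\phi_1}f)(T_b^{\phi_2}g)$, and one would finish by Theorem A on each factor followed by $\|(T_a^{\phi_1}f)(T_b^{\phi_2}g)\|_{L^r}\le \|T_a^{\phi_1}f\|_{L^p}\|T_b^{\phi_2}g\|_{L^q}$. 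The entire difficulty is that a general $\sigma \in BS^m_{1,0}$ does not factor, so the first task is to manufacture such a factorization approximately.

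To do this I would take a Littlewood--Paley decomposition $\sigma = \sum_{j,k\ge 0}\sigma_{j,k}$, with $\sigma_{j,k}$ supported where $|\xi|\sim 2^j$ and $|\eta|\sim 2^k$, and split into the diagonal $|j-k|\le C_0$ and the two off-diagonal cones $k<j-C_0$ and $j<k-C_0$. On each block both variables lie in a box of side $\sim 2^{\max(j,k)}$, so I expand in a Fourier series, obtaining on the support $\sigma_{j,k}(x,\xi,\eta)=\sum_{\mu,\nu\in\Z^n} c^{j,k}_{\mu,\nu}(x)\,e^{i2^{-j}\mu\cdot\xi}\,e^{i2^{-k}\nu\cdot\eta}$. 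Integration by parts against the $BS^m_{1,0}$ bounds gives rapid decay $|c^{j,k}_{\mu,\nu}(x)|\lesssim 2^{\max(j,k)m}(1+|\mu|)^{-M}(1+|\nu|)^{-M}$, uniformly in $x$ and its derivatives. Reinserting the cutoffs, each Fourier mode is a genuine tensor product, so that $T_{\sigma_{j,k}}^{\phi_1,\phi_2}(f,g)(x)=\sum_{\mu,\nu}c^{j,k}_{\mu,\nu}(x)\,(A^j_\mu f)(x)\,(B^k_\nu g)(x)$, where $A^j_\mu, B^k_\nu$ are frequency-localized linear FIOs carrying the orders $m_p$ at scale $2^j$ and $m_q$ at scale $2^k$ respectively.

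In the interior range $1<p,q<\infty$ (where $H^p=L^p$, $H^q=L^q$) I would control the diagonal through square functions. Randomizing over signs and applying Khintchine's inequality, $\|(\sum_j|A^j_\mu f|^2)^{1/2}\|_{L^p}\sim \mathbb{E}\,\|T^{\phi_1}_{\sum_j\pm a^j_\mu}f\|_{L^p}$, and since the disjointly-supported pieces $\sum_j\pm 2^{jm_p}\psi_j(\xi)e^{i2^{-j}\mu\cdot\xi}$ lie in $S^{m_p}_{1,0}$ with seminorm $\lesssim(1+|\mu|)^{N}$ uniformly in the signs, Theorem A yields $\|(\sum_j|A^j_\mu f|^2)^{1/2}\|_{L^p}\lesssim (1+|\mu|)^N\|f\|_{L^p}$, and likewise for $g$. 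On the diagonal, Cauchy--Schwarz in $j\sim k$ and Hölder in $x$ then give the bound, the powers of $(1+|\mu|)$, $(1+|\nu|)$ being absorbed by the Fourier decay. The off-diagonal cones have a paraproduct structure: there one factor sits at strictly lower frequency, and the surplus in the order bookkeeping, $2^{jm}=2^{jm_p}2^{km_q}2^{(j-k)m_q}$ with $2^{(j-k)m_q}\le 1$ when $m_q<0$, furnishes geometric summability in $j-k$; in the borderline case $q=2$ (where $m_q=0$) this gain degenerates and one instead pairs the high-frequency square function against a uniform maximal bound for the low-frequency factor.

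The main obstacle lies at the endpoints $p=1$ and $q=\infty$ (hence $r=\infty$, i.e.\ $BMO$), where Theorem A is unavailable at the critical order: one needs the genuine Hardy-space/$BMO$ linear bounds $T^{\phi_1}_a\colon H^1\to L^1$ and, dually, $T^{\phi_2}_b\colon L^\infty\to BMO$ at order $-(n-1)/2$. These lie just beyond the stated $L^p$ conclusion of Theorem A and must be supplied from the same oscillatory-integral analysis via atomic decomposition and duality. Granting them as the vertex estimates, one reaches the full range by bilinear complex interpolation of an analytic family $\sigma_z\in BS^{m(z)}_{1,0}$ whose order tracks $m_0(p,q)$, interpolating among the corners $L^2\times L^2\to L^1$ (immediate from Theorem A at order $0$ and Cauchy--Schwarz), $H^1\times L^2$, $L^2\times BMO$, and so on. Reconciling the square-function and paraproduct summations at $p=2$ or $q=2$ with these endpoint linear bounds, and managing the quasi-Banach targets $r<1$ in the interpolation, is where the real work concentrates.
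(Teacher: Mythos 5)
You are attempting to prove Theorem B, but note first that this paper contains no proof of it: Theorem B is quoted from Rodr\'iguez-L\'opez--Rule--Staubach \cite{RRS-AM} and is used as a black box (Theorem \ref{th-main} is deduced from Theorem B, Proposition \ref{prop-L1}, and interpolation). So the comparison can only be with what the theorem genuinely requires, and with the paper's analogous endpoint argument. Your reduction to tensor products --- dyadic decomposition in $(\xi,\eta)$ into a diagonal and two cones, Fourier series expansion on each block with rapidly decaying coefficients --- is sound, and is in fact exactly the device this paper uses to prove Proposition \ref{prop-L1}. Your treatment of the interior range $1<p,q<\infty$ (Khintchine/square functions on the diagonal, the geometric gain $2^{(j-k)m_q}$ off the diagonal when $m_q<0$) is also plausible, since Theorem A is available at the critical order there and the randomized symbols $\sum_j \pm a^j_\mu$ have uniformly bounded $S^{m_p}_{1,0}$ seminorms.

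The genuine gap is at the vertices, and it is not a residual detail: it is the actual content of Theorem B. Complex interpolation reproduces the piecewise-affine order $m_0(p,q)$ only from \emph{bilinear} estimates at the extreme points of the four regions of affineness, i.e.\ at $(p,q)\in\{(1,1),(1,2),(2,1),(2,2),(1,\infty),(\infty,1),(2,\infty),(\infty,2),(\infty,\infty)\}$. At the vertices with no $L^\infty$ slot your scheme does close, because H\"older survives ($\|uv\|_{L^{1/2}}\le\|u\|_{L^1}\|v\|_{L^1}$, etc.) once one has $H^1\to L^1$ at order $-(n-1)/2$ from \cite{SSS} and $L^2\to L^2$ at order $0$. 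But at $(2,\infty)$, $(1,\infty)$, $(\infty,\infty)$ the pairing-of-linear-bounds idea fails for a concrete reason: at the critical order $-(n-1)/2$ the linear theory gives only $T^{\phi_2}_b\colon L^\infty\to BMO$, never $L^\infty\to L^\infty$ (Theorem A demands $m<-(n-1)/2$ strictly when $p=\infty$, and the critical $L^\infty$ bound is false in general), and there is no H\"older inequality with a $BMO$ factor: $\|uv\|_{L^2}\not\lesssim\|u\|_{L^2}\|v\|_{BMO}$, $\|uv\|_{L^1}\not\lesssim\|u\|_{L^1}\|v\|_{BMO}$. Thus the estimates you call ``vertex estimates'' are linear, while interpolation needs bilinear ones, and the latter cannot be manufactured from the former by splitting the operator. (Also, at $(p,q)=(1,\infty)$ the target is $L^1$, not $BMO$ as your sketch says.) These $L^\infty$-slot vertices are precisely where genuinely bilinear work is unavoidable; the paper's own Proposition \ref{prop-L1} shows what such an argument looks like: atomic decomposition of $h^1$, the splitting $g=g\ichi_{B}+g\ichi_{B^c}$ with $B$ determined by $\sup|\nabla_\xi\phi_2|$, pointwise kernel decay (Lemma \ref{lem-D3}) for the far part, and an $L^2\times L^2\to L^1$ pairing exploiting the atom's moment condition through the factor $\min\{(2^jr)^{-n/2},\,2^jr\}$ --- none of which is H\"older pairing of linear endpoint bounds. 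A secondary gap of the same nature: your $q=2$ off-diagonal case invokes a ``uniform maximal bound for the low-frequency factor'' and, implicitly, almost-orthogonality of the outputs; both are standard for Fourier multipliers (Hardy--Littlewood maximal function, disjoint output frequency annuli) but neither is automatic for Fourier integral operators, whose outputs are not frequency-localized, so that step too requires a real argument rather than a citation.
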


The boundedness property 
for the bilinear operator $T_{\sigma}^{\phi_1, \phi_2}$
was first investigated by Grafakos--Peloso \cite{GP},
where the above-mentioned boundedness 
for $1 < p, q < 2$ and 
$m < m_{0}(p, q)$ was obtained.
After that, 
Rodr\'iguez-L\'opez--Rule--Staubach \cite{RRS-AM} 
generalized it 
to the full range of $1\le p, q\le \infty$,
and gave Theorem B.
Also, we remark that, in \cite{RS},
Rodr\'iguez-L\'opez--Staubach
considered the bilinear operator 
$T_{\sigma}^{\phi_1, \phi_2}$ 
for an amplitude without compactness of the spatial support
and 
for phase functions with a bit stronger condition,
and gave a global boundedness from $L^p \times L^q$ to $L^r$ 
when $m < m_{0}(p, q)$.
In \cite{RRS-TAMS},
Rodr\'iguez-L\'opez--Rule--Staubach
obtained a global boundedness from $L^2 \times L^2$ to $L^1$ 
for $m = m_{0}(2,2) = 0$.

Quite recently,
for a typical case of phase functions
derived from the wave operators
and for an $x$-independent amplitude 
$\sigma = \sigma(\xi, \eta)$,
the authors \cite{KMT}
improved the order $m$ stated in Theorem B.
To describe the result obtained there, 
we define 
\begin{equation*}
m_1 (p, q)
=
\begin{cases}
{-(n-1) \big( 
|\frac{1}{p}- \frac{1}{2}|+|\frac{1}{q}- \frac{1}{2}| \big) }
& 
\text{if $1\le p, q \le 2$ or if $2\le p, q \le \infty$,} 
\\
{-\big( 
\frac{1}{p} - \frac{1}{2} \big) 
- (n-1) \big( \frac{1}{2} - \frac{1}{q} \big) }
& 
\text{if $1\le p\le 2\le q \le \infty$ and $\frac{1}{p}+\frac{1}{q} \le 1$,}
\\
{-(n-1)
\big( \frac{1}{p} - \frac{1}{2} \big) 
- \big( \frac{1}{2} - \frac{1}{q} \big) }
& 
\text{if $1\le p\le 2\le q \le \infty$ and $\frac{1}{p}+\frac{1}{q}\ge 1$,}
\\
{-(n-1)
\big( 
\frac{1}{2} - \frac{1}{p} \big) 
- \big( \frac{1}{q} - \frac{1}{2} \big) }
& 
\text{if $1\le q\le 2\le p \le \infty$ and $\frac{1}{p}+\frac{1}{q} \le 1$,}
\\
{-\big( \frac{1}{2} - \frac{1}{p} \big) 
- (n-1) \big( \frac{1}{q} - \frac{1}{2} \big) }
& 
\text{if $1\le q\le 2\le p \le \infty$ and $\frac{1}{p}+\frac{1}{q}\ge 1$.}
\end{cases}
\end{equation*}
With this notation,
the following global boundedness holds.

\begin{referthmC}
Let $m \in \R$,
$n\ge 2$,  
$1\le p, q\le \infty$, and $1/p+1/q=1/r$.  
Suppose that
$\phi_{i} (x, \xi) = x \cdot \xi + |\xi|$,
$i=1,2$,
and
$\sigma = \sigma(\xi, \eta) \in BS^{m}_{1,0}$.
Then, 
if $m= m_1 (p, q)$,
there exists a positive constant $C$ such that
\[
\|T_{\sigma}^{\phi_1, \phi_2}\|_{H^p \times H^q \to L^r}
\le C,
\]  
where $L^r$ should be replaced by $BMO$ when $r=\infty$. 
\end{referthmC}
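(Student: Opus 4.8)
The plan is to exploit the very special structure of the phases. Since $\phi_{i}(x,\xi)=x\cdot\xi+|\xi|$ and $\sigma$ does not depend on $x$, the exponential factorizes as $e^{i\phi_1(x,\xi)+i\phi_2(x,\eta)}=e^{ix\cdot(\xi+\eta)}e^{i|\xi|}e^{i|\eta|}$, so that
\[
T_{\sigma}^{\phi_1,\phi_2}(f,g)(x)
= T_{\sigma}\bigl(e^{i\sqrt{-\Delta}}f,\; e^{i\sqrt{-\Delta}}g\bigr)(x),
\]
where $T_{\sigma}$ is the bilinear Fourier multiplier operator with symbol $\sigma(\xi,\eta)$ and $e^{i\sqrt{-\Delta}}$ is the half-wave propagator (the multiplier $e^{i|\xi|}$). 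This splits the problem into two independent ingredients: the mapping properties of the linear propagator $e^{i\sqrt{-\Delta}}$ on the scale of Hardy, Lebesgue, and $BMO$ spaces, and the boundedness of bilinear multiplier operators with symbols in $BS^{0}_{1,0}$. For the former I would use the classical fixed-time wave estimates (Miyachi and Peral), namely that $e^{i\sqrt{-\Delta}}(1-\Delta)^{-s/2}$ is bounded on $H^{p}$, $L^{p}$, and $BMO$ precisely when $s\ge(n-1)|1/p-1/2|$; this is the prototype behind Theorem A for the phase $x\cdot\xi+|\xi|$. For the latter I would invoke the Coifman--Meyer theory together with its Hardy-space endpoint refinements.

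Next I would reduce to finitely many exponent pairs. The function $(1/p,1/q)\mapsto m_1(p,q)$ is continuous and affine on each of the six regions cut out of $[0,1]^{2}$ by the lines $1/p=1/2$, $1/q=1/2$, and $1/p+1/q=1$, so analytic interpolation in which the order $m$ varies together with the exponents reduces everything to the vertices of this subdivision: the pairs $(2,2)$; $(2,\infty),(\infty,2),(2,1),(1,2)$; $(1,1),(\infty,\infty)$; and $(1,\infty),(\infty,1)$. At every vertex except the last two, the value $m_1$ is exactly what the factorization delivers: one distributes $m_1$ between the two frequency variables, absorbs the corresponding powers of $(1-\Delta)$ into the two propagators by the fixed-time wave estimate, and closes with a $BS^{0}_{1,0}$ bilinear multiplier bound. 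At $(2,2)$ the propagators are $L^{2}$-isometries and $m_1=0$; at $(1,1)$ and $(\infty,\infty)$ each propagator costs $(n-1)/2$ derivatives, matching $m_1=-(n-1)$. Thus the two regions $1\le p,q\le2$ and $2\le p,q\le\infty$, where $m_1=m_0$, are completely covered (this reproduces the order of Theorem B, here valid globally because $\sigma$ is $x$-independent).

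The crux is therefore the corner $(p,q)=(1,\infty)$, the case $(\infty,1)$ following by interchanging $\xi$ and $\eta$. Here one must prove $\|T_{\sigma}^{\phi_1,\phi_2}(f,g)\|_{L^{1}}\le C\|f\|_{H^{1}}\|g\|_{L^{\infty}}$ for $\sigma\in BS^{-n/2}_{1,0}$, and the naive factorization is wasteful: splitting $-n/2$ between the two propagators would force a full $(n-1)/2$ loss on the $L^{\infty}$ factor as well, giving only $m=-(n-1)$. Instead I would split $\sigma=\sigma^{\RomI}+\sigma^{\II}$ according to $|\eta|\lesssim|\xi|$ and $|\eta|\gtrsim|\xi|$. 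On $\supp\sigma^{\RomI}$ the symbol decays like $(1+|\xi|)^{-n/2}$, so freezing $g$ and regarding the operator as a linear wave Fourier integral operator in $f$,
\[
T_{\sigma^{\RomI}}^{\phi_1,\phi_2}(f,g)(x)
=\int_{\R^{n}}e^{i\phi_1(x,\xi)}\,a_{g}(x,\xi)\,\widehat{f}(\xi)\,d\xi,
\qquad
a_{g}(x,\xi)=\int_{\R^{n}}e^{i\phi_2(x,\eta)}\sigma^{\RomI}(\xi,\eta)\,\widehat{g}(\eta)\,d\eta,
\]
and using the $L^{\infty}\to L^{\infty}$ wave bound for the $\eta$-integral, yields $\sup_{x}\bigl|\partial_{\xi}^{\beta}a_{g}(x,\xi)\bigr|\lesssim\|g\|_{L^{\infty}}(1+|\xi|)^{-(n-1)/2-|\beta|}$; the extra half-derivative of decay makes $a_{g}$ a subcritical symbol of order $-(n-1)/2$ in $\xi$.

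The main obstacle is twofold. First, $a_{g}$ carries \emph{no} regularity in $x$, since an $L^{\infty}$ function carries none, so no off-the-shelf symbol-class theorem applies; I would instead establish an $H^{1}\to L^{1}$ estimate for wave Fourier integral operators that requires only $L^{\infty}$-in-$x$ control of the amplitude and its $\xi$-derivatives. This is done through the atomic decomposition of $H^{1}$ and a second dyadic (angular) decomposition into $\sim2^{j(n-1)/2}$ caps of aperture $2^{-j/2}$ on which $\phi_1$ is essentially linear, following Seeger--Sogge--Stein; in that scheme the $x$-smoothness of the amplitude is never used, and the gained half-derivative compensates for the tangential spreading of the atom along the light cone. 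Second, and harder, is the piece $\sigma^{\II}$ with $|\eta|\gtrsim|\xi|$, where the frequency of the $L^{\infty}$ factor dominates: freezing either variable then fails, and the term is genuinely bilinear, requiring a joint wave-packet analysis of the two oscillatory factors. I expect this bilinear high-frequency interaction to be the technical heart of the argument. Once $(1,\infty)$ and $(\infty,1)$ are in hand, interpolation with the easy vertices completes the proof.
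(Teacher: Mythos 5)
Your skeleton is sound and in fact matches the architecture behind the paper: the factorization $T_{\sigma}^{\phi_1,\phi_2}(f,g)=T_{\sigma}\bigl(e^{i\sqrt{-\Delta}}f,e^{i\sqrt{-\Delta}}g\bigr)$, the interpolation down to vertex exponents, and the identification of $(p,q)=(1,\infty)$, $m_1=-n/2$, as the only vertex the naive splitting cannot reach, are all correct (the paper likewise derives everything by complex interpolation from Theorem B plus an $h^1\times L^{\infty}\to L^1$ endpoint of order $-n/2$, Proposition \ref{prop-L1}; Theorem C itself is quoted from \cite{KMT}, whose proof is of the same kind). But your treatment of that endpoint has a genuine quantitative error. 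The claimed bound $\sup_x|\partial_\xi^\beta a_g(x,\xi)|\lesssim\|g\|_{L^\infty}(1+|\xi|)^{-(n-1)/2-|\beta|}$ is false for $n\ge3$: the $L^\infty\to L^\infty$ wave estimate costs $(n-1)/2$ derivatives, and on $\supp\sigma^{\mathrm{I}}$ the $\eta$-frequencies run all the way up to $|\eta|\approx|\xi|$, so this loss is paid at frequency $\approx|\xi|$. Concretely, to apply the $L^\infty$ bound one must factor $(1+|\xi|+|\eta|)^{-n/2}\le(1+|\xi|)^{-\delta}(1+|\eta|)^{-(n/2-\delta)}$ with the $\eta$-order at most $-(n-1)/2$, forcing $\delta\le1/2$; dyadically, the shell $|\eta|\approx2^k\le2^j\approx|\xi|$ contributes $\approx2^{-jn/2}\,2^{k(n-1)/2}\|g\|_{L^\infty}$ and the sum over $k\le j$ is $\approx2^{-j/2}\|g\|_{L^\infty}$. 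So $a_g$ has order $-1/2$, not $-(n-1)/2$; these agree only for $n=2$, and for $n\ge3$ order $-1/2$ is far too weak for any $H^1\to L^1$ FIO bound. On top of this, the piece $\sigma^{\mathrm{II}}$ is left open by your own account, and the rough-in-$x$ $H^1\to L^1$ theorem you invoke is unproved: amplitudes that are merely $L^\infty$ in $x$ are genuinely dangerous (even $L^2$-boundedness can fail for such symbols), so "Seeger--Sogge--Stein never uses $x$-smoothness" cannot be taken for granted.

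The paper's proof avoids exactly this trap by never freezing $g$ and never applying an $L^\infty$ bound at high $\eta$-frequencies. After a dyadic decomposition and Fourier-series expansion on each shell, the amplitude becomes $\sum_j c_j(x)\,\Theta_1(x,2^{-j}\xi)\,\Theta_2(x,2^{-j}\eta)$ with $|c_j|\lesssim2^{-jn/2}$, so the bilinear operator is a sum over $j$ of pointwise products of two \emph{linear} FIOs, and one is free to pair the two factors in different norms. Low $\eta$-frequencies are handled by $L^1\times L^\infty$ with the $\epsilon$-loss of Theorem A absorbed by $2^{-jn/2}$. For the dangerous high-$\eta$ part, $f$ is decomposed into atoms and $g=g\ichi_B+g\ichi_{B^c}$ spatially: the far part is controlled by non-stationary-phase kernel decay (Lemma \ref{lem-D3}, uniform in $j$), and the near part by Cauchy--Schwarz with \emph{lossless} $L^2$ bounds on both factors, the atom's moment condition giving $\|T^{\phi_1}_{\Theta_1(x,2^{-j}\xi)}f\|_{L^2}\lesssim2^{jn/2}\min\{(2^jr)^{-n/2},\,2^jr\}$, whose sum against $2^{-jn/2}$ converges uniformly in $r$. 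This machinery treats the regions $|\eta|\lesssim|\xi|$ and $|\eta|\gtrsim|\xi|$ identically, so the "genuinely bilinear" high-frequency interaction you expected to be the heart of the matter never has to be confronted at all; the actual heart is the variable-separation step that turns the bilinear operator into products of linear ones.
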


Since $m_0(p, q) \le m_1(p, q)$
for $n \ge 2$ and $1\le p, q \le \infty$,
we see that Theorem C is an improvement of Theorem B.
Also, it should be remarked that 
the number $m_1(p, q)$ is optimal
when $1 \le p, q \le 2$, $2\le p, q \le \infty$,
or $1/p+1/q=1$
(see \cite[Theorem 1.5]{KMT}).

The purpose of this paper is
to improve Theorem B and
to generalize Theorem C to
the boundedness for bilinear Fourier integral operators.
The main theorem reads as follows.

\begin{thm} \label{th-main}
Let $m \in \R$,
$n\ge 2$,  
$1\le p, q\le \infty$, and $1/p+1/q=1/r$.  
Suppose that 
$\phi_1$ and $\phi_2$ are non-degenerate phase functions
and an amplitude 
$\sigma \in BS^{m}_{1,0}$
is compactly supported in the spatial variable.
Then, if $m = m_{1}(p, q) $,
there exists a positive constant $C$ such that
\[
\|T_{\sigma}^{\phi_1, \phi_2}\|_{H^p \times H^q \to L^r}
\le C ,
\]
where $L^r$ should be replaced by $BMO$ when $r=\infty$. 
\end{thm}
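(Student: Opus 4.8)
The plan is to reduce Theorem~\ref{th-main} to the known results (Theorems~B and~C) by a combination of (i) interpolation/duality to reduce the range of exponents, (ii) a Littlewood--Paley decomposition in the frequency variables $(\xi,\eta)$ that separates the operator into pieces where one frequency dominates, and (iii) on each such piece, freezing the dominant phase and treating the operator as a \emph{linear} Fourier integral operator (in the dominant variable) whose amplitude is a \emph{symbol of a pseudo-differential operator} (in the subordinate variable). Since $m_1(p,q)$ agrees with $m_0(p,q)$ on the two corner regions $1\le p,q\le 2$ and $2\le p,q\le\infty$ and along the line $1/p+1/q=1$, the genuinely new content lies in the mixed regions, say $1\le p\le 2\le q\le\infty$; by the symmetry $\xi\leftrightarrow\eta$ the case $1\le q\le 2\le p\le\infty$ is identical, and the two subcases $1/p+1/q\lessgtr 1$ will be handled by interpolating between the endpoints where one of the improved one-variable exponents is sharp.

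\emph{First} I would perform a dyadic decomposition $\sigma=\sum_{j,k\ge 0}\sigma_{j,k}$, where $\sigma_{j,k}$ is supported in $|\xi|\sim 2^{j}$, $|\eta|\sim 2^{k}$ (with the usual modification near the origin). On the region $k\le j$ the first frequency dominates, so $\phi_2(x,\eta)$ and the $\eta$-dependence of $\sigma_{j,k}$ vary on the coarser scale $2^{k}$; I would Taylor-expand or Fourier-expand in $\eta$ to write the $\eta$-integration as an averaged superposition of \emph{modulations}, exhibiting $g\mapsto \int e^{i\phi_2(x,\eta)}\sigma\,\widehat g\,d\eta$ as a bounded operator into $L^\infty$ (using $g\in H^q\hookrightarrow\cdots$) with an operator norm gaining a factor tied to the subordinate decay. \emph{Then}, with $g$'s contribution frozen, the remaining map $f\mapsto T^{\phi_1}_{\tau}f$ is a linear FIO with amplitude $\tau\in S^{m}_{1,0}(N)$ uniformly in the frozen data, and Theorem~A supplies the $H^p\to L^p$ (respectively $L^p\to L^p$, $BMO$) bound with the sharp linear exponent $-(n-1)|1/p-1/2|$ in the dominant variable and only the \emph{Sobolev-type} exponent $-|1/2-1/q|$ (non-oscillatory, pseudodifferential) in the subordinate variable. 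Summing over $j,k$ with these two independent gains reproduces exactly the mixed-region values of $m_1(p,q)$; the region $k>j$ is symmetric with the roles reversed.

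\emph{The main obstacle} I anticipate is making the ``freeze the subordinate variable and invoke the linear theorem uniformly'' step rigorous while keeping the constants in Theorem~A's $\|\cdot\|_{S^m_{1,0}(N)}$ controlled independently of the frozen parameter, and---more delicately---summing the dyadic pieces in the endpoint cases $r=1$ and $r=\infty$, where $L^1$ and $BMO$ do not interact well with naive triangle-inequality summation. To handle this I would not sum absolutely but rather establish the relevant \emph{square-function} or atomic estimates: for the $H^1\times L^\infty\to L^1$ endpoint (the focus announced in the abstract) I would test against $H^1$-atoms and exploit the almost-orthogonality of the $\sigma_{j,k}$ together with the kernel decay of each linear FIO piece away from the (non-degenerate) canonical relation, so that the atomic cancellation absorbs the logarithmic loss. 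The non-endpoint mixed cases then follow by complex interpolation of the analytic family $\sigma\mapsto T^{\phi_1,\phi_2}_{\sigma}$ in the order $m$, anchored at these endpoint estimates and at the sharp corner estimates already contained in Theorem~B.
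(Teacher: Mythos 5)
Your proposal follows essentially the same route as the paper: the paper proves precisely the endpoint bound $h^1 \times L^\infty \to L^1$ at order $-n/2$ (Proposition \ref{prop-L1}) via a dyadic decomposition into pieces where one frequency dominates, a Fourier-series expansion that factors each piece into a product of two \emph{linear} FIOs estimated by Theorem A, and an atomic argument combining a kernel-decay lemma (Lemma \ref{lem-D3}) with the moment condition of small atoms, after which Theorem \ref{th-main} follows by complex interpolation with Theorem B. The inaccuracies in your sketch are non-essential: $m_1$ does \emph{not} agree with $m_0$ on the line $1/p+1/q=1$ when $n \ge 3$, the subordinate factor is still an oscillatory FIO carrying the phase $\phi_2$ (handled by Theorem A, not as a pseudodifferential operator) even though its loss is the $-|1/2-1/q|$ you predict, and Theorem C (restricted to the wave phases and $x$-independent amplitudes) plays no role in the interpolation scheme.
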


The contents of the rest of the paper are as follows. 
In Section \ref{pf-thm}, 
we present and prove Proposition \ref{prop-L1}
concerning the boundedness
from $h^1 \times L^\infty$ to $L^1$,
which is an endpoint case of Theorem \ref{th-main}
and implies Theorem \ref{th-main} 
with the aid of complex interpolation.
In Appendix \ref{app},
we show Lemma \ref{lem-D3} 
to be used for proving Proposition \ref{prop-L1}.

We end this section by introducing 
some notations used throughout this paper.
\begin{notation}\label{notation}
The Fourier transform and the inverse Fourier transform 
on $\R^n$ are defined by 
\begin{align*}&
\widehat{f}(\xi)
=
\int_{\R^n}
e^{-i \xi \cdot x}
f(x)\, dx ,
\\&
(g)^{\vee}(x)
=
\frac{1}{(2\pi)^n} 
\int_{\R^n}
e^{i \xi \cdot x}
g(\xi)\, d\xi. 
\end{align*}

We take $\varphi, \psi \in \calS (\R^n)$ 
satisfying that 
$\varphi = 1$ on 
$\{ |\xi| \le 1 \}$,
$\supp \varphi \subset 
\{ |\xi| \le 2 \}$,
$\supp \psi \subset 
\{ 1/2 \leq |\xi| \leq 2 \}$,
and
$\varphi+\sum_{j\in\N} \psi ( 2^{-j} \cdot ) = 1$.
In what follows, we will write
$\psi_{0} = \varphi$,
$\psi_j = \psi ( 2^{-j} \cdot )$ 
for $j \in \N$, 
and 
$\varphi_j = \varphi ( 2^{-j} \cdot )$ 
for $j \in \N_{0}$.
Then, we see that 
$\varphi_0 = \psi_0 = \varphi$ and
\begin{align*}
\sum_{j=0}^{k} \psi_j = \varphi_k,  
\quad k\in \N_0. 
\end{align*}
We define $C^{\infty}$ function
$\zeta = 1 - \varphi$.
Then we have
$\partial^{\alpha} \zeta 
\in C^{\infty}_{0} (\R^n)$
for $|\alpha|\ge 1$,
$\zeta = \sum_{j \in \N} \psi_{j}$,
and
\begin{align*} &
\zeta =0 \;\; \text{on}\;\; \{ |\xi|\le 1 \},
\quad
\zeta =1 \;\; \text{on}\;\; \{ |\xi|\ge 2 \}.
\end{align*}

For a smooth function $\theta$ on $\R^n$ and 
for $N \in \N_{0}$, we write 
$\|\theta\|_{C^N}=
\max_{|\alpha|\le N} 
\sup_{\xi} \big| \partial_{\xi}^{\alpha} \theta (\xi) \big|$. 

\end{notation}


\section{Proof of Theorem \ref{th-main}}
\label{pf-thm}

By complex interpolation,
Theorem \ref{th-main} follows from
Theorem B and Proposition \ref{prop-L1} below,
and thus, this section is only devoted to proving 
the following statement.

\begin{prop}\label{prop-L1} 
Let $n\ge 2$.
Suppose that 
$\phi_1$ and $\phi_2$ are non-degenerate phase functions
and an amplitude 
$\sigma \in BS^{-n/2}_{1,0}$
is compactly supported in the spatial variable.
Then, 
there exists a positive constant $C$ such that
\[
\|T_{\sigma}^{\phi_1, \phi_2}
\|_{h^1 \times L^\infty \to L^1}
\le C.
\]
\end{prop}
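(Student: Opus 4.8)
The plan is to establish the endpoint estimate
\[
\|T_{\sigma}^{\phi_1, \phi_2}(f, g)\|_{L^1}
\le C \|f\|_{h^1} \|g\|_{L^\infty}
\]
for $\sigma \in BS^{-n/2}_{1,0}$ compactly supported in $x$. Since $h^1$ has an atomic decomposition, by linearity in $f$ it suffices to prove a uniform bound $\|T_{\sigma}^{\phi_1, \phi_2}(a, g)\|_{L^1} \le C \|g\|_{L^\infty}$ for every $h^1$-atom $a$, where $a$ is supported in a ball $B = B(x_0, \rho)$, satisfies $\|a\|_{L^\infty} \le |B|^{-1}$, and has vanishing mean when $\rho \le 1$. (For large cubes $\rho > 1$ the atom behaves like an $L^2$-normalized bump and the moment condition is not needed.) I would first perform a Littlewood--Paley decomposition $\sigma = \sum_{j \ge 0} \sigma_j$ in the frequency variables $(\xi, \eta)$, writing $\sigma_j(x, \xi, \eta) = \sigma(x, \xi, \eta)\, \Psi_j(\xi, \eta)$ where $\Psi_j$ localizes $|(\xi, \eta)| \sim 2^j$, so that each $T_j := T_{\sigma_j}^{\phi_1, \phi_2}$ involves a single dyadic frequency annulus of scale $2^j$.

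\emph{Key steps.} The heart of the matter is to prove, for each dyadic piece, a decay estimate of the form
\[
\|T_j(a, g)\|_{L^1} \le C\, 2^{-j\epsilon}\, \|g\|_{L^\infty}
\]
that sums in $j$, exploiting the gain $2^{-jn/2}$ coming from $\sigma \in BS^{-n/2}_{1,0}$. The $L^\infty$ factor is handled by simply bounding $g$ by $\|g\|_{L^\infty}$ after integrating the kernel in the $\eta$-variable: the oscillatory integral in $\eta$ against $\widehat{g}$ produces, via the non-degeneracy of $\phi_2$ and a stationary-phase or $L^2 \to L^\infty$ kernel bound, a function bounded by $C\, 2^{j(n - (n-1)/2)}\|g\|_{L^\infty}$ up to the amplitude normalization — so the effective operator acting on $a$ is a \emph{linear} Fourier integral operator in the $\xi$-variable carrying a residual amplitude in $S^{-1/2}_{1,0}$ (this is exactly the $L^1 \to L^1$ endpoint order $m = -(n-1)/2$ for $p=1$ appearing in Theorem A). I would then split the $L^1$-norm over the region near the canonical image of $B$ under the phase $\phi_1$ and the region far from it. On the near region I would use Cauchy--Schwarz together with the $L^2$-boundedness of $T_j$ (which follows from $\sigma \in BS^0_{1,0} \subset$ the $L^2 \times L^\infty \to L^2$ theory, or directly from Theorem B at $p=q=2$) and the measure estimate $|\{\text{near region}\}| \lesssim 2^{jn}|B|$. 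On the far region I would integrate by parts in $\xi$ exploiting the non-degeneracy condition $\det(\partial_x \partial_\xi \phi_1) \ne 0$ to gain arbitrarily many factors of $(2^j \dist)^{-M}$, using the atom's moment condition to subtract the value of the smooth kernel at the center $x_0$ and thereby gain an extra factor $2^j \rho$ that compensates the $2^{jn}|B|$ volume growth for small balls.

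\emph{Main obstacle.} The principal difficulty is the simultaneous handling of the two independent oscillatory factors $e^{i\phi_1(x,\xi)}$ and $e^{i\phi_2(x,\eta)}$ \emph{sharing the same output variable $x$}: after reducing the $g$-side to an $L^\infty$ bound, one must ensure the residual $x$-dependence introduced by the $\eta$-integration does not destroy the $\xi$-integration-by-parts, and that the two phase Hessians do not conspire to cancel the oscillatory gain. I expect the decisive technical input to be Lemma \ref{lem-D3} from the appendix, which presumably furnishes the pointwise kernel bound for the $\eta$-integral (a single-annulus FIO kernel estimate with $L^\infty$ data) uniformly in the parameters, allowing the problem to be reduced cleanly to the linear theory in $\xi$. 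The most delicate bookkeeping will be in the far-region estimate for small atoms, where the moment cancellation must be quantified carefully enough that the sum over $j \ge 0$ and the sum over the dyadic annuli of $\{x : 2^k \rho \le \dist \le 2^{k+1}\rho\}$ both converge with room to spare.
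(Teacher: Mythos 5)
Your skeleton --- atomic decomposition of $h^1$, dyadic frequency localization, input from the linear theory of Theorem A, kernel decay via Lemma \ref{lem-D3}, moment condition for small atoms --- is indeed the paper's skeleton, but the step you yourself call the heart of the matter contains a genuine gap. After integrating in $\eta$ you want the operator acting on the atom to become ``a linear Fourier integral operator in $\xi$ with residual amplitude in $S^{-1/2}_{1,0}$''. Writing (via the kernel representation, since $\widehat{g}$ is only a distribution for $g\in L^\infty$)
\[
T_j(a,g)(x)=\int_{\R^n}e^{i\phi_1(x,\xi)}\,h_j(x,\xi)\,\widehat{a}(\xi)\,d\xi,
\qquad
h_j(x,\xi)=\int_{\R^n}e^{i\phi_2(x,\eta)}\sigma_j(x,\xi,\eta)\,\widehat{g}(\eta)\,d\eta,
\]
one does obtain a pointwise bound on $h_j$, but $h_j$ is \emph{not} in $S^{-1/2}_{1,0}$ --- nor in any class $S^{m}_{1,0}$ --- uniformly in $j$: every $x$-derivative falling on $e^{i\phi_2(x,\eta)}$ produces a factor $\nabla_x\phi_2(x,\eta)=O(2^j)$, so $\big|\partial_x^\alpha h_j\big|\lesssim 2^{j|\alpha|}\,2^{-j/2}\|g\|_{L^\infty}$. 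Theorem A requires symbol estimates on $x$-derivatives as well as $\xi$-derivatives, so it cannot be applied to $h_j$ with constants uniform in $j$; you flag exactly this obstacle (``the residual $x$-dependence \dots'') but propose no mechanism to remove it. The paper's mechanism is precisely the idea missing from your proposal: on each dyadic piece it expands the amplitude in a Fourier series in $(\xi,\eta)$, producing a rapidly convergent sum of \emph{tensor products} $c_j^{(\nu,\mu)}(x)\,e^{i\nu\cdot2^{-j}\xi}\psi_j(\xi)\,e^{i\mu\cdot2^{-j}\eta}\varphi_j(\eta)$ with $|c_j^{(\nu,\mu)}(x)|\lesssim 2^{-jn/2}(1+|\nu|)^{-L}(1+|\mu|)^{-L}$. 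Only for such separated amplitudes does $T^{\phi_1,\phi_2}$ factor as a pointwise product of two genuine linear FIOs, after which H\"older's inequality together with Theorem A and Lemma \ref{lem-D3} can be applied factor by factor.

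Two quantitative points would sink the scheme even if that reduction were legitimate. First, the correct $\eta$-side cost for an $L^\infty$ bound on a single annulus is $2^{j(n-1)/2}$ (your exponent $2^{j(n-(n-1)/2)}$ is a slip), so the residual order is $-n/2+(n-1)/2=-1/2$; this coincides with the $L^1\to L^1$ endpoint order $-(n-1)/2$ of Theorem A only when $n=2$, and even then Theorem A at $p=1$ requires strictly smaller order. For $n\ge3$ your scheme pays the FIO loss twice ($\eta$-side and $\xi$-side), totalling $2^{j(n-1+\epsilon)}$ against a gain of only $2^{jn/2}$, and the dyadic sum diverges. The paper avoids this by arranging, in each regime ($\tau_1$; $\tau_2$ with atoms of radius $r\ge1$; $\tau_2$ with $g\ichi_{B^c}$; $\tau_2$ with $g\ichi_{B}$), that at most \emph{one} of the two linear factors pays an FIO loss, the other being estimated in $L^2$ (where order $0$ suffices) or by the lossless kernel decay of Lemma \ref{lem-D3}. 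Second, the uniform-in-atoms bound $\|T_j(a,g)\|_{L^1}\le C2^{-j\epsilon}\|g\|_{L^\infty}$ you aim for is false at the critical order $-n/2$: if it held, every amplitude of order $-n/2+\delta$ with $0<\delta<\epsilon$ would map $h^1\times L^\infty$ to $L^1$, contradicting the optimality of $m_1(1,\infty)=-n/2$ established in \cite[Theorem 1.5]{KMT} for the non-degenerate phases $x\cdot\xi+|\xi|$. The correct summable substitute, which the paper extracts from the moment condition and $L^2$ kernel estimates, is $\min\{(2^jr)^{-n/2},\,2^jr\}$ for atoms of radius $r\le1$: its sum over $j$ is bounded uniformly in $r$, although for $r\approx2^{-j}$ the $j$-th term is $O(1)$ rather than $O(2^{-j\epsilon})$, so the bookkeeping must exploit the interplay between the atom scale and the frequency scale rather than uniform geometric decay.
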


\begin{proof}
Before starting the proof,
we give a remark on 
the non-degeneracy conditions of 
$\phi_1$ and $\phi_2$.
Let $K$ be the closure of 
the set 
$\{ x \in \R^n \mid \sigma (x, \cdot , \cdot) \neq 0 \}$.
Then, 
since $\phi_1$ and $\phi_2$ satisfy
the non-degeneracy conditions 
for $x \in {K}$ and $\xi \in \R^n \setminus\{0\}$,
we see from continuity
that there exists a neighborhood 
$\widetilde{K}$ of the set $K$
such that
\begin{equation} \label{ND-n}
\det \Big( \frac{\partial^2 \phi_i (x,\xi)}{\partial{x} \partial{\xi}} \Big) \neq 0
\;\; \text{for}
\;\; x \in \widetilde{K} ,
\;\; \xi \in \R^n \setminus\{0\} ,
\;\; i = 1,2.
\end{equation}
In the following argument,
$\widetilde{K}$ always denotes
such a neighborhood of $K$.

Now, we shall begin with the proof of the proposition.
We first decompose $\sigma$ as 
\begin{align*} &
\sigma (x, \xi, \eta) 
=
\sum_{j=0}^{\infty} 
\sum_{k=0}^{\infty} 
\sigma (x, \xi, \eta) \psi_j (\xi) \psi_k (\eta)
\\&=
\sigma (x, \xi, \eta) \varphi (\xi) \varphi (\eta)
+
\sum_{j=1}^{\infty} 
\sum_{k=0}^{j} 
\sigma (x, \xi, \eta) \psi_j (\xi) \psi_k (\eta)
+
\sum_{k=1}^{\infty} 
\sum_{j=0}^{k-1} 
\sigma (x, \xi, \eta) \psi_j (\xi) \psi_k (\eta)
\\&=
\sigma (x, \xi, \eta) \varphi (\xi) \varphi (\eta)
+
\sum_{j \in \N} 
\sigma (x, \xi, \eta) \psi_{j} (\xi) \varphi_{j} (\eta)
+
\sum_{k \in \N} 
\sigma (x, \xi, \eta) \varphi_{k-1} (\xi) \psi_k (\eta)
\\&=
\sigma_{0} (x, \xi, \eta)
+
\sigma_{\RomI} (x, \xi, \eta)
+
\sigma_{\II} (x, \xi, \eta) .
\end{align*}

We consider the amplitude $\sigma_{\RomI}$. 
Take
a function $\chi \in C_{0}^{\infty}(\R^n)$ such that
\[
\chi = 1 \;\;\text{on}\;\; K,
\quad
\supp \chi 
\subset \widetilde{K} ,
\]
and functions 
$\widetilde{\psi}, \widetilde{\varphi} \in C_{0}^{\infty}(\R^n)$ 
such that 
\[\begin{array}{ll}
\widetilde{\psi} = 1 
\;\;\text{on}\;\; \{ 2^{-1}\le |\xi| \le 2 \}, \quad
&
\supp \widetilde{\psi} 
\subset \{3^{-1}\le |\xi| \le 3\}, 
\vspace{3pt}\\
\widetilde{\varphi} = 1 
\;\;\text{on}\;\; \{ |\xi| \le 2 \}, \quad
&
\supp \widetilde{\varphi} 
\subset \{|\xi| \le 3\} .
\end{array}\]
Then we can write
\[
\sigma_{\RomI}(x, \xi, \eta)
=
\sum_{j \in \N }
\sigma (x, \xi, \eta) 
\widetilde{\psi}(2^{-j}\xi) 
\widetilde{\varphi}(2^{-j}\eta) \,
\chi(x) \psi_{j} (\xi) 
\chi(x) \varphi_{j} (\eta). 
\]
Since $\sigma \in BS_{1,0}^{-n/2}$ and
\[
\supp \sigma (x, 2^{j} \xi, 2^{j}\eta)\widetilde{\psi}(\xi) 
\widetilde{\varphi}(\eta)
\subset 
K \times 
\{3^{-1}\le |\xi|\le 3\}\times 
\{|\eta|\le 3\},
\]
the following estimate holds:
\begin{equation*}
\left|
\partial^{\alpha}_{\xi} 
\partial^{\beta}_{\eta} 
\big(
\sigma (x, 2^{j} \xi, 2^{j}\eta)\widetilde{\psi}(\xi) 
\widetilde{\varphi}(\eta)
\big) 
\right| 
\le 
C_{\alpha, \beta}\,  2^{-j n/2}
\end{equation*}
with $C_{\alpha, \beta}$ independent of $j\in \N_0$. 
Hence, by the Fourier series expansion
with respect to the variables $\xi$ and $\eta$, 
we can write 
\[
\sigma (x, 2^{j} \xi, 2^{j}\eta)\widetilde{\psi}(\xi) 
\widetilde{\varphi}(\eta)
=
\sum_{a, b \in \Z^n} 
c_{\RomI, j}^{(a, b)} (x)
e^{i a \cdot \xi} e^{i b \cdot \eta}, 
\quad 
|\xi|<\pi, \;\; |\eta|<\pi,  
\]
with the coefficient satisfying that
for $L>0$
\begin{equation} \label{cjab-decay}
| c_{\RomI, j}^{(a, b)} (x) | 
\lesssim 2^{-j n/2} 
(1+|a|)^{-L} (1+|b|)^{-L},
\quad
x \in \R^n.
\end{equation}
Changing variables 
$\xi \to 2^{-j}\xi$ and $\eta \to 2^{-j}\eta$
and 
multiplying $(\chi(x))^2 \psi_{j} (\xi) \varphi_{j} (\eta)$, 
we obtain 
\[
\sigma (x, \xi, \eta)
\psi_{j} (\xi) 
\varphi_{j} (\eta)
=
\sum_{a, b \in \Z^n} 
c_{\RomI, j}^{(a, b)} (x) \,
e^{i a \cdot 2^{-j} \xi} e^{i b \cdot 2^{-j}\eta}
\chi(x) \psi_{j} (\xi) \chi(x) \varphi_{j} (\eta).  
\]
Hence, by the definitions of
$\psi_{j}$ and $\varphi_{j}$
in Notation \ref{notation}, 
the amplitude $\sigma_{\RomI}$ is written as
\begin{align*}
\sigma_{\RomI} (x, \xi, \eta)
&=
\sum_{a, b \in \Z^n}
\sum_{j \in \N }
c_{\RomI, j}^{(a, b)} (x) \,
e^{i a \cdot 2^{-j} \xi} e^{i b \cdot 2^{-j}\eta}
\chi(x) \psi (2^{-j} \xi) \,
\chi(x) \varphi (2^{-j} \eta) 
\\
&
=
\sum_{a, b \in \Z^n}
\sigma_{\RomI}^{(a, b)} (x, \xi, \eta), 
\end{align*}
where 
\[
\sigma_{\RomI}^{(a, b)} (x, \xi, \eta) =
\sum_{j \in \N }
c_{\RomI, j}^{(a, b)} (x) \,
\Psi^{(a)} (x, 2^{-j} \xi) \,
\Phi^{(b)} (x, 2^{-j} \eta)
\]
and 
\begin{equation*}
\Psi^{(\nu)} (x, \xi)
= \chi(x) \, e^{i \nu \cdot \xi} \psi (\xi), 
\quad  
\Phi^{(\nu)} (x, \eta) 
= \chi(x) \, e^{i \nu \cdot \eta} \varphi (\eta),
\quad
\nu \in \Z^n .
\end{equation*}

By similar arguments, 
$\sigma_{0}$ can be written as
\begin{align*}
&
\sigma_{0} (x, \xi, \eta)
=
\sum_{a, b \in \Z^n}
c_{0}^{(a, b)} (x) \,
\Phi^{(a)} (x, \xi) \,
\Phi^{(b)} (x, \eta) ,
\end{align*}
where the coefficient
$c_{0}^{(a, b)}$ 
satisfies the same condition as in \eqref{cjab-decay} with $j=0$,
and
$\sigma_{\II}$ can be written as
\begin{align*}
\sigma_{\II} (x, \xi, \eta)
=
\sum_{a, b \in \Z^n}
\sum_{j \in \N }
c_{\II, j}^{(a, b)} (x) \,
\Phi^{(a)} (x, 2^{-(j-1)} \xi) \,
\Psi^{(b)} (x, 2^{-j} \eta) ,
\end{align*}
where the coefficient 
$c_{\II, j}^{(a, b)}$ 
satisfies the same condition as in \eqref{cjab-decay}.

Hereafter we shall consider 
the amplitudes $\widetilde{\sigma}_0$ and $\widetilde{\sigma}$
of the forms
\begin{align}&\label{assumption-sigma_0}
\widetilde{\sigma}_0 (x, \xi, \eta) 
=
c_{0} (x) \,
\Theta_{1} (x, \xi) \,
\Theta_{2} (x, \eta) ,
\\
&\label{assumption-sigma}
\widetilde{\sigma} (x, \xi, \eta) 
=
\sum_{j \in \N}
c_{j} (x) \,
\Theta_{1} (x, 2^{-j} \xi) \,
\Theta_{2} (x, 2^{-j} \eta), 
\end{align}
where 
$( c_j (x) )_{j\in \N_0}$ is 
a sequence of complex-valued functions satisfying 
\begin{equation}\label{assumption-cj}
\left| c_{j} (x) \right| 
\le 2^{-j n/2} A, 
\quad 
x \in \R^n,
\;\;
j\in \N_0, 
\end{equation}
with some $A\in (0, \infty)$,  
and $\Theta_{1}$ and $\Theta_{2}$ are 
smooth functions on $\R^{2n}$ given by
\begin{equation}\label{assumption-Theta}
\Theta_{1} (x, \xi) = \chi (x) \theta_1 (\xi), \quad
\Theta_{2} (x, \eta) = \chi (x) \theta_2 (\eta)
\end{equation}
with $\chi, \theta_1, \theta_2 \in \calS(\R^n)$ 
satisfying that 
\begin{equation}\label{assumption-theta}
\supp \chi \subset \widetilde{K},
\quad
\supp \theta_1 , \, \supp \theta_2 
\subset 
\{|\xi|\le 2 \} 
\end{equation}
(for the set $\widetilde{K}$,
see the argument around \eqref{ND-n}).
For such $\widetilde{\sigma}_0$ and $\widetilde{\sigma}$, 
we shall prove that
there exist constants
$c \in (0, \infty)$ and $N \in \N$
such that
\begin{equation}\label{Goal-estimate}
\| T_{ \widetilde{\sigma}_0 }^{\phi_1, \phi_2}
\|_{h^1 \times L^{\infty}\to L^1}, 
\;
\| T_{ \widetilde{\sigma} }^{\phi_1, \phi_2}
\|_{h^1 \times L^{\infty}\to L^1}
\le 
c A \|\theta_1\|_{C^N} \|\theta_2\|_{C^N} .
\end{equation}

If this is proved, by applying
\eqref{Goal-estimate} for $\widetilde{\sigma}$ to 
$c_j = c_{\RomI, j}^{(a, b)}$,  
$\Theta_{1} = \Psi^{(a)}$, and 
$\Theta_{2} = \Phi^{(b)}$,
we have 
\begin{align*}
\| T_{ {\sigma}_{\RomI}^{(a, b)} }^{\phi_1, \phi_2}
\|_{ h^1 \times L^{\infty}\to L^1 }
&\lesssim 
(1+|a|)^{-L} (1+|b|)^{-L} 
\|e^{i a \cdot \xi} \psi (\xi)\|_{C^{N}_{\xi}} 
\|e^{i b \cdot \eta} \varphi (\eta)\|_{C^{N}_{\eta}} 
\\&\lesssim (1+|a|)^{-L+N}
(1+|b|)^{-L+N}, 
\end{align*}
and, thus, taking $L$ sufficiently large and taking sum over 
$a, b \in \Z^n$, 
we obtain 
\[
\left\| 
T_{ {\sigma}_{\RomI} }^{\phi_1, \phi_2}
\right\|_{ h^1 \times L^{\infty}\to L^1 } 
\lesssim 1.
\]
In the same way, 
we obtain 
\[
\| T_{ {\sigma}_{0} }^{\phi_1, \phi_2}
\|_{ h^1 \times L^{\infty}\to L^1 } ,
\;
\| T_{ {\sigma}_{\II} }^{\phi_1, \phi_2}
\|_{ h^1 \times L^{\infty}\to L^1 } 
\lesssim 1.
\]
The above three estimates complete 
the proof of the proposition.

Thus the proof is reduced to showing \eqref{Goal-estimate} 
for $\widetilde{\sigma}_0$ and $\widetilde{\sigma}$ 
given by \eqref{assumption-sigma_0}--\eqref{assumption-theta}.
Using the smooth functions 
$\varphi$ and $\zeta$ 
of Notation \ref{notation}, 
we decompose the amplitude
$\widetilde{\sigma}$ into two parts:  
\begin{align*}
&
\widetilde{\sigma} (x, \xi, \eta)
=
\tau_1 (x, \xi, \eta) + \tau_2 (x, \xi, \eta) ,
\\
&
\tau_1 (x, \xi, \eta) = 
\varphi (\eta) \widetilde{\sigma} (x, \xi, \eta), 
\quad
\tau_2 (x, \xi, \eta) = 
\zeta (\eta) 
\widetilde{\sigma} (x, \xi, \eta) .  
\end{align*}
Then,
bilinear Fourier integral operators with 
the amplitudes
$\widetilde{\sigma}_0, \tau_1, \tau_2$
are expressed,
by using linear ones,
as
\begin{align}
\label{T_0}
T_{ \widetilde{\sigma}_0 }^{\phi_1, \phi_2} (f, g)(x)
&=
c_{0} (x) \,
T^{\phi_1}_{ \Theta_{1} } f(x) \;
T^{\phi_2}_{ \Theta_{2} } g(x), 
\\ 
\label{T_tau1}
T_{ \tau_1 }^{\phi_1, \phi_2} (f, g)(x)
&= \sum_{j \in \N } 
c_{j} (x) \,
T^{\phi_1}_{ \Theta_{1} (x, 2^{-j} \xi) } f(x) \;
T^{\phi_2}_{ \Theta_{2} (x, 2^{-j} \eta) \varphi(\eta) } g(x), 
\\ 
\label{T_tau2}
T_{ \tau_2 }^{\phi_1, \phi_2} (f, g)(x)
&= \sum_{j \in \N } 
c_{j} (x) \,
T^{\phi_1}_{ \Theta_{1} (x, 2^{-j} \xi) } f(x) \;
T^{\phi_2}_{ \Theta_{2} (x, 2^{-j} \eta) \zeta(\eta) } g(x).
\end{align}

We shall consider the amplitudes and the phase functions
of the linear Fourier integral operators
appearing above.
For the amplitudes,
we observe that,
if $m \le 0$ and $N \in \N_{0}$,
then
\begin{align*}&
\| \Theta_{i}(x, 2^{-j} \xi) \|_{ S_{1,0}^{m} (N) }
=
\| \chi(x) \, \theta_i (2^{-j} \xi ) \|_{ S_{1,0}^{m} (N) }
\lesssim 
2^{-j m} \| \theta_i \|_{C^N},
\quad
i=1,2,
\\&
\| \Theta_{2} (x, 2^{-j} \eta) \varphi(\eta) \|_{ S_{1,0}^{m} (N) }
=
\| \chi(x) \, \theta_2 (2^{-j} \eta ) \varphi (\eta) \|_{ S_{1,0}^{m} (N) }
\lesssim 
\| \theta_2 \|_{C^N},
\\&
\| \Theta_{2} (x, 2^{-j} \eta) \zeta(\eta) \|_{ S_{1,0}^{m} (N) }
=
\| \chi(x) \, \theta_2 (2^{-j} \eta ) \zeta(\eta) \|_{ S_{1,0}^{m} (N) }
\lesssim 
2^{-j m} \| \theta_2 \|_{C^N} 
\end{align*}
hold for all $j \in \N_0$.
In fact, 
since $|\xi| \le 2^{j+1}$ holds
on the support of $\theta_{i} (2^{-j} \cdot)$,
if $m \le 0$ and $\beta \in (\N_{0})^n$,
then we have
\[ 
( 1+ |\xi| )^{-(m-|\beta|)} \lesssim 2^{-j(m-|\beta|)} ,
\quad j \in \N_{0},
\]
which implies the first inequality.
By using the facts that
$\varphi$ and $\partial^{\alpha} \zeta$,
$|\alpha| \ge 1$, belong to $C^{\infty}_{0} (\R^n)$,
the remaining two estimates
can be obtained.
For the phase functions,
we see 
from \eqref{ND-n}, \eqref{assumption-Theta},
and \eqref{assumption-theta}
that
$\phi_{i} (x, \xi)$, $i = 1,2$, 
satisfies the non-degeneracy condition
for $x$ in the spatial support of 
$\Theta_{i} (x, 2^{-j} \xi)$ 
and 
for $\xi \in \R^n \setminus \{0\}$.
Moreover, from the assumption,
they are real-valued and 
positively homogeneous of degree one 
in the frequency variable.

Hence, 
we can apply
Theorem A to 
the linear Fourier integral operators
in \eqref{T_0}, \eqref{T_tau1}, and \eqref{T_tau2}:
if 
\[
m_{\epsilon} 
= - \frac{n-1}{2} - \epsilon,
\quad
0 < \epsilon < 1/2,
\]
then 
there exists $N \in \N$
such that
\begin{align}
&\label{f-L^1-bdd}
\| T^{\phi_1}_{ \Theta_{1} (x, 2^{-j} \xi) } \|_{ L^{1} \to L^{1} } 
\lesssim 
2^{ -j m_\epsilon } 
\| \theta_1 \|_{ C^N } ,
\\
&\label{f-L^2-bdd}
\| T^{\phi_1}_{ \Theta_{1} (x, 2^{-j} \xi) } \|_{ L^{2} \to L^{2} } 
\lesssim 
\| \theta_1 \|_{ C^N } ,
\\
&\label{g-L^infty-bdd}
\| T^{\phi_2}_{ \Theta_{2} (x, 2^{-j} \eta)} \|_{ L^{\infty} \to L^{\infty} }
\lesssim 
2^{ -j m_\epsilon } 
\| \theta_2 \|_{ C^N } ,
\\
&\label{g1-L^infty-bdd}
\| T^{\phi_2}_{ \Theta_{2} (x, 2^{-j} \eta) \varphi(\eta) } \|_{ L^{\infty} \to L^{\infty} }
\lesssim 
\| \theta_2 \|_{ C^N } ,
\\
&\label{g2-L^2-bdd}
\| T^{\phi_2}_{ \Theta_{2} (x, 2^{-j} \eta) \zeta(\eta) } \|_{ L^{2} \to L^{2} }
\lesssim 
\| \theta_2 \|_{ C^N } ,
\\
&\label{g2-L^infty-bdd}
\| T^{\phi_2}_{ \Theta_{2} (x, 2^{-j} \eta) \zeta(\eta) } \|_{ L^{\infty} \to L^{\infty} }
\lesssim 
2^{- j m_\epsilon} \| \theta_2 \|_{ C^N } 
\end{align}
hold for all $j \in \N_0$.

Now, we shall actually estimate the bilinear operators
in \eqref{T_0}, \eqref{T_tau1}, and \eqref{T_tau2}.
Estimates for $\widetilde{\sigma}_0$ and $\tau_1$ are simple.
In fact, we have by 
\eqref{T_0},
\eqref{f-L^1-bdd}, and
\eqref{g-L^infty-bdd} for $j=0$
\begin{align*}
\| T_{ \widetilde{\sigma}_0 }^{\phi_1, \phi_2} (f, g) \|_{L^1}
&\le A
\| T^{\phi_1}_{ \Theta_{1} } f \|_{L^1} \,
\| T^{\phi_2}_{ \Theta_{2} } g \|_{L^{\infty}} 
\\&\lesssim A
\| \theta_1 \|_{ C^N } 
\| \theta_2 \|_{ C^N } 
\|f\|_{L^1} \|g\|_{L^\infty},
\end{align*}
and by 
\eqref{T_tau1},
\eqref{f-L^1-bdd}, and
\eqref{g1-L^infty-bdd}
\begin{align*}
\| T_{ \tau_1 }^{\phi_1, \phi_2} (f, g) \|_{L^1}
&\le A
\sum_{j \in \N } 
2^{-j n/2} \,
\| T^{\phi_1}_{ \Theta_1 (x, 2^{-j} \xi) } f \|_{L^1} \,
\| T^{\phi_2}_{ \Theta_2 (x, 2^{-j} \eta) \varphi(\eta) } g \|_{L^{\infty}} 
\\&\lesssim A
\sum_{j \in \N } 
2^{-j n/2} \, 2^{j(\frac{n-1}{2} + \epsilon)} 
\| \theta_1 \|_{ C^N } \|f\|_{L^1}
\| \theta_2 \|_{ C^N } \|g\|_{L^\infty}
\\&\lesssim A
\| \theta_1 \|_{ C^N } \| \theta_2 \|_{ C^N } 
\|f\|_{L^1} \|g\|_{L^\infty} .
\end{align*}
These imply the desired estimates for 
$\widetilde{\sigma}_0$ and $\tau_1$ 
with the embedding $h^1 \hookrightarrow L^1$.

Next, we consider the amplitude
$\tau_2$.
To obtain the desired inequality, 
it suffices to prove
\[
\left\|
T_{\tau_2}^{\phi_1, \phi_2} (f, g) 
\right\|_{L^1}
\lesssim 
A
\|\theta_1\|_{C^{N}}
\|\theta_2\|_{C^{N}}
 \|g\|_{L^{\infty}}
\]
for $h^1$-atoms $f$, which satisfy that
\[
\supp f \subset \{ y \in \R^n \mid |y - \bar{y}| \le r \} , 
\quad 
\|f\|_{L^{\infty}}\le r^{-n}, 
\]
and, in addition, if $r \le 1$
\[
\int f(y)\, dy = 0.
\]

For the case $r \ge 1$,
since $\|f\|_{L^2} \lesssim r^{-n/2} \le 1$,
we have by
\eqref{T_tau2},
\eqref{f-L^2-bdd}, and
\eqref{g2-L^infty-bdd}
\begin{align*}&
\| T_{ \tau_2 }^{\phi_1, \phi_2} (f, g) \|_{L^1}
\le
| \widetilde{K} |^{1/2}
\| T_{ \tau_2 }^{\phi_1, \phi_2} (f, g) \|_{L^2}
\\&\lesssim A
\sum_{j \in \N } 
2^{-j n/2} \,
\| T^{\phi_1}_{ \Theta_1 (x, 2^{-j} \xi) } f \|_{L^2} \,
\| T^{\phi_2}_{ \Theta_2 (x, 2^{-j} \eta) \zeta(\eta) } g \|_{L^{\infty}} 
\\&\lesssim A
\sum_{j \in \N } 
2^{-j n/2} \, 2^{j(\frac{n-1}{2} + \epsilon)} 
\| \theta_1 \|_{ C^N } \|f\|_{L^2}
\| \theta_2 \|_{ C^N } \|g\|_{L^\infty}
\\&\lesssim A
\| \theta_1 \|_{ C^N } \| \theta_2 \|_{ C^N } 
\|g\|_{L^\infty} .
\end{align*}

For the case $r \le 1$, we will use the following lemma.
This lemma is essentially proved in \cite[Lemma 4.4]{KMT},
but we will give its proof in Appendix A
for the reader's convenience.
\begin{lem}\label{lem-D3} 
Let $n\ge 2$ and $\calK \subset \R^n$ be a compact set, 
and let $\zeta$ be the smooth function 
given in Notation \ref{notation} 
and let 
$\theta \in C_{0}^{\infty} (\R^{n})$ 
satisfy $\supp \theta \subset \{|\xi|\le 2\}$.
Suppose that 
$\phi$ is a non-degenerate phase function 
and set 
$R = 1+ \sup \big\{ |\nabla_{\xi} \phi (x, \xi)| \mid x \in \calK, |\xi|=1 \big\}$.  
Then for each positive integer $N>n$, 
there  exists a positive constant $C$
such that 
\[
\left| \int_{\R^n} e^{-i y \cdot \xi + i \phi (x, \xi)} 
\zeta(\xi) \theta (2^{-j}\xi) \,d\xi \right| 
\le C \,
\|\theta\|_{C^N} |y|^{-N} 
\]
for $x \in \calK$, $|y| \ge 2R$, and $j\in \N$. 
\end{lem}

Now, we shall prove the case $r \le 1$.
We set 
\[
R = 1+ \sup \{ |\nabla_{\xi} \phi_2 (x, \xi)| 
\mid x \in \widetilde{K}, \; |\xi|=1 \}
\]
and decompose 
$g$ as $g=g\ichi_{B}+g\ichi_{B^c}$,
where
$B=\{x \in \R^n \mid |x|\le 2R\}$. 
We first consider the estimate
for $T_{ \tau_2 }^{\phi_1, \phi_2} (f, g\ichi_{B^c})$.
By the kernel representation of 
(linear) Fourier integral operators,
we have
\begin{align*}&
T^{\phi_2}_{ \Theta_2 (x, 2^{-j} \eta) \zeta(\eta) } (g\ichi_{B^c})(x)
\\&= 
\int_{|y| \ge 2R} g(y) 
\Big\{ \chi(x) 
\int_{\R^n} e^{-i y \cdot \eta + i \phi_2 (x, \eta)} 
\zeta(\eta) \theta_2 (2^{-j}\eta) \,d\eta \Big\}
dy .
\end{align*}
Then, Lemma \ref{lem-D3} yields that
\begin{align*}
\sup_{x \in \widetilde{K}}
\big| T^{\phi_2}_{ \Theta_2 (x, 2^{-j} \eta) \zeta(\eta) } (g\ichi_{B^c})(x) \big|
&\lesssim
\int_{|y| \ge 2R} | g(y) | 
\|\theta_2\|_{C^N} |y|^{-N} 
\,dy
\\&\lesssim 
\|\theta_2\|_{C^N} \|g\|_{L^\infty} .
\end{align*}
Therefore, since $\|f\|_{L^1} \lesssim 1$, we have by 
\eqref{T_tau2}, 
\eqref{f-L^1-bdd}, and 
the above estimate
\begin{align*}
\| T_{ \tau_2 }^{\phi_1, \phi_2} (f, g\ichi_{B^c}) \|_{L^1}
&\le A
\sum_{j \in \N } 
2^{-j n/2} \,
\| T^{\phi_1}_{ \Theta_1 (x, 2^{-j} \xi) } f \|_{L^1} \,
\| T^{\phi_2}_{ \Theta_2 (x, 2^{-j} \eta) \zeta(\eta) } (g\ichi_{B^c}) \|_{L^{\infty}(\widetilde{K})} 
\\&\lesssim A
\sum_{j \in \N } 
2^{-j n/2} \, 2^{j(\frac{n-1}{2} + \epsilon)} 
\| \theta_1 \|_{ C^N } \|f\|_{L^1}
\| \theta_2 \|_{ C^N } \|g\|_{L^\infty}
\\&\lesssim A
\| \theta_1 \|_{ C^N } \| \theta_2 \|_{ C^N } 
\|g\|_{L^\infty} .
\end{align*}

We finally consider the estimate
for $T_{ \tau_2 }^{\phi_1, \phi_2} (f, g\ichi_{B})$.
We notice from \eqref{f-L^2-bdd} that
\begin{align} \label{tau2-f-L^2}
\| T^{\phi_1}_{ \Theta_1 (x, 2^{-j} \xi) } f \|_{L^2} 
\lesssim 
\| \theta_1 \|_{ C^N } \|f\|_{ L^2 }
\lesssim 
\| \theta_1 \|_{ C^N } 2^{j n/2} \big( 2^{j} r \big)^{-n/2} .
\end{align}
Moreover, we have by the kernel representation
\[
T^{\phi_1}_{ \Theta_1 (x, 2^{-j} \xi) } f(x)
= 
\int_{\R^n} f(y) K_j(x, y) \,dy,
\]
where
\[
K_j(x, y)
= 
\int_{\R^n} e^{-i y \cdot \xi + i \phi_1 (x, \xi)} 
\chi(x) \theta_1 (2^{-j}\xi) \,d\xi .
\]
Here, by the moment condition of $h^1$-atoms,
we have
\begin{align*}&
T^{\phi_1}_{ \Theta_1 (x, 2^{-j} \xi) } f(x)
= 
\int_{\R^n} f(y) \Big\{ K_j(x, y) - K_j(x,\bar{y}) \Big\}
\,dy
\\& = 
\sum_{|\alpha|=1}
\iint_{ \substack{ |y-\bar{y}|\le r, \\ 0<t<1} }
f(y) \, (y-\bar{y})^{\alpha} \,
\big( \partial_{y}^{\alpha} K_j \big) \big(x, \bar{y}+t(y-\bar{y}) \big) 
\,dt dy ,
\end{align*}
where $\bar{y} \in \R^n$ is the center of the ball containing $\supp f$.
By taking a function $\widetilde{\theta} \in C^{\infty}_{0} (\R^n)$
such that $\widetilde{\theta} = 1$ on $\supp \theta_1$,
the kernel $\partial_{y}^{\alpha} K_j $ can be regarded as
\begin{align*}
\big( \partial_{y}^{\alpha} K_j \big) (x, y)
&=(-i)^{|\alpha|}
\int_{\R^n} e^{ i \phi_1 (x, \xi)} 
\chi(x) \theta_1 (2^{-j}\xi) 
\Big\{ e^{-i y \cdot \xi } \xi^{\alpha} 
\widetilde{\theta} (2^{-j}\xi) \Big\} 
\,d\xi 
\\&=(-i)^{|\alpha|} \,
T^{\phi_1}_{ \Theta_1 (x, 2^{-j} \xi) } 
\big( [e^{-i y \cdot \xi } \xi^{\alpha} 
\widetilde{\theta} (2^{-j}\xi)]^{\vee} \big)(x),
\end{align*}
which implies from 
\eqref{f-L^2-bdd} and Plancherel's theorem
that
\begin{align*}
\| \big( \partial_{y}^{\alpha} K_j \big) (x, y) \|_{L^2_{x}} 
&=
\| T^{\phi_1}_{ \Theta_1 (x, 2^{-j} \xi) } 
\big( [e^{-i y \cdot \xi } \xi^{\alpha} 
\widetilde{\theta} (2^{-j}\xi)]^{\vee} \big)(x) 
\|_{L^2_{x}} 
\\&\lesssim 
\| \theta_1 \|_{ C^N } 
\|[e^{-i y \cdot \xi } \xi^{\alpha} 
\widetilde{\theta} (2^{-j}\xi)]^{\vee} 
\|_{ L^2 }
\\&\approx
2^{j n/2} \, 2^{j|\alpha|}
\| \theta_1 \|_{ C^N } 
\end{align*}
for any $y\in\R^n$.
Hence, we have
\begin{align} \label{tau2-fmom-L^2}
\begin{split}&
\| T^{\phi_1}_{ \Theta_1 (x, 2^{-j} \xi) } f \|_{L^2} 
\\&\le
\sum_{|\alpha|=1}
\iint_{ \substack{ |y-\bar{y}|\le r, \\ 0<t<1} }
|f(y)| \, |y-\bar{y}| \,
\Big\| \big( \partial_{y}^{\alpha} K_j \big) 
\big(x, \bar{y}+t(y-\bar{y}) \big) \Big\|_{L^2_{x}}
\,dt dy
\\&\lesssim
2^{j n/2} \, (2^{j} r)
\| \theta_1 \|_{ C^N } .
\end{split}
\end{align}
Combining \eqref{tau2-f-L^2} and \eqref{tau2-fmom-L^2},
we obtain
\begin{align} \label{f-L^2-bdd-new}
\| T^{\phi_1}_{ \Theta_1 (x, 2^{-j} \xi) } f \|_{L^2} 
\lesssim
2^{j n/2} 
\min\{ (2^{j} r)^{-n/2}, (2^{j} r) \}
\| \theta_1 \|_{ C^N } .
\end{align}
Therefore,
since $\| g\ichi_{B} \|_{L^{2}} \lesssim \|g\|_{L^{\infty}}$, 
we have by 
\eqref{T_tau2}, 
\eqref{f-L^2-bdd-new}, and 
\eqref{g2-L^2-bdd}
\begin{align*}&
\| T_{ \tau_2 }^{\phi_1, \phi_2} (f, g\ichi_{B}) \|_{L^1}
\\&\le A
\sum_{j \in \N } 
2^{-j n/2} \,
\| T^{\phi_1}_{ \Theta_1 (x, 2^{-j} \xi) } f \|_{L^2} \,
\| T^{\phi_2}_{ \Theta_2 (x, 2^{-j} \eta) \zeta(\eta) } (g\ichi_{B}) \|_{L^{2}} 
\\&\lesssim A
\sum_{j \in \N } 
2^{-j n/2} \, 2^{j n/2} 
\min\{ (2^{j} r)^{-n/2}, (2^{j} r) \}
\| \theta_1 \|_{ C^N }
\| \theta_2 \|_{ C^N } \|g\ichi_{B}\|_{L^2}
\\&\lesssim A
\| \theta_1 \|_{ C^N } \| \theta_2 \|_{ C^N } 
\|g\|_{L^\infty} .
\end{align*}
This completes the proof of Proposition \ref{prop-L1}.
\end{proof}

\appendix\section{}\label{sectionappendix}
\label{app}

In this appendix, we prove Lemma \ref{lem-D3}.
To this end, we will use the lemma below, 
which is essentially proved in \cite[Lemma 4.3]{KMT}.

\begin{lem}\label{lem-220925}
Let $n\ge 2$ and $\calK \subset \R^n$ be a compact set, 
and let $\psi$ be a $C^{\infty}$ function on $\R^n$ 
satisfying $\supp \psi \subset \{2^{-1}\le |\xi|\le 2\}$.
Suppose that 
$\phi$ is a non-degenerate phase function 
and set 
$R = 1+ \sup \big\{ |\nabla_{\xi} \phi (x, \xi)| \mid x \in \calK, |\xi|=1 \big\}$.  
Then for each positive integer $N$, 
there  exists a positive constant $C$ 
such that 
\[
\left| \int_{\R^n} e^{-i y \cdot \xi + i \phi (x, \xi)} 
\psi (2^{-j}\xi) \,d\xi \right| 
\le C \,
\|\psi\|_{C^N} (2^j)^{n-N} |y|^{-N} 
\]
for $x \in \calK$, $|y| \ge 2R$, and $j\in \N$. 
\end{lem}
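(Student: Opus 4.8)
The plan is to treat this as a non-stationary phase integral with a large parameter, obtained by normalizing the frequency scale. First I would substitute $\xi = 2^j\eta$ and invoke the degree-one homogeneity of $\phi$, namely $\phi(x,2^j\eta)=2^j\phi(x,\eta)$, to rewrite the integral as
\[
\int_{\R^n} e^{-iy\cdot\xi + i\phi(x,\xi)}\psi(2^{-j}\xi)\,d\xi
= 2^{jn}\int_{\R^n} e^{i2^j\Psi(\eta)}\psi(\eta)\,d\eta,
\quad \Psi(\eta) = -y\cdot\eta + \phi(x,\eta).
\]
This isolates the large parameter $\lambda=2^j$ and produces an amplitude $\psi$ whose support $\{2^{-1}\le|\eta|\le 2\}$ is independent of $j$; all the $j$-dependence is now concentrated in the prefactor $2^{jn}$ and in $\lambda$.

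Next I would establish the two quantitative inputs that make the phase non-stationary. Since $\nabla_\eta\phi(x,\cdot)$ is homogeneous of degree zero, its values on $\supp\psi$ coincide with its values on the unit sphere, so by the definition of $R$ we have $|\nabla_\eta\phi(x,\eta)|\le R-1$ for $x\in\calK$ and $\eta\in\supp\psi$. Combined with $|y|\ge 2R$, this gives the lower bound
\[
|\nabla_\eta\Psi(\eta)| = |{-y}+\nabla_\eta\phi(x,\eta)| \ge |y| - (R-1) \ge |y|/2
\quad\text{on }\supp\psi.
\]
At the same time, because $-y\cdot\eta$ is linear, every derivative $\partial_\eta^\beta\Psi$ with $|\beta|\ge 2$ equals $\partial_\eta^\beta\phi$, which is bounded on the compact set $\calK\times\{2^{-1}\le|\eta|\le 2\}$ by a constant depending only on $\phi$, $\calK$, $n$, and $N$.

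Then I would integrate by parts $N$ times using the transpose of $L = (i\lambda|\nabla_\eta\Psi|^2)^{-1}\,\nabla_\eta\Psi\cdot\nabla_\eta$, which satisfies $Le^{i\lambda\Psi}=e^{i\lambda\Psi}$, to obtain $\int e^{i\lambda\Psi}\psi = \int e^{i\lambda\Psi}({}^tL)^N\psi$. Expanding $({}^tL)^N\psi$ by the Leibniz rule yields a sum of terms, each carrying a factor $\lambda^{-N}$, a product of derivatives $\partial_\eta^\beta\Psi$ with $1\le|\beta|\le N+1$, a derivative $\partial_\eta^\gamma\psi$ with $|\gamma|\le N$, and a power $|\nabla_\eta\Psi|^{-s}$ with $N\le s\le 2N$. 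The amplitude derivatives are controlled by $\|\psi\|_{C^N}$; the second- and higher-order phase derivatives are the bounded constants above; and the first-order factors $|\partial_\eta\Psi|\le|\nabla_\eta\Psi|$ are absorbed into the denominators. Since $|y|\ge 2R\ge 2$ forces $|\nabla_\eta\Psi|\ge|y|/2\ge 1$, every power $|\nabla_\eta\Psi|^{-s}$ with $s\ge N$ is at most $(|y|/2)^{-N}$. Collecting these estimates gives
\[
\Big|\int_{\R^n} e^{i\lambda\Psi}\psi\,d\eta\Big| \lesssim \lambda^{-N}\,|y|^{-N}\,\|\psi\|_{C^N},
\]
and multiplying by the prefactor $2^{jn}=\lambda^n$ reproduces the claimed bound $(2^j)^{n-N}|y|^{-N}\|\psi\|_{C^N}$.

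The main obstacle is the bookkeeping in the integration-by-parts step: one must verify that no term of $({}^tL)^N\psi$ carries fewer than $N$ net powers of $|\nabla_\eta\Psi|^{-1}$, so that the decay $|y|^{-N}$ is genuinely attained, while terms with more than $N$ such powers cause no harm precisely because $|\nabla_\eta\Psi|\ge 1$ on the relevant region. Keeping the homogeneity degrees straight—degree one for $\phi$, degree zero for $\nabla_\eta\phi$, and the resulting uniform bounds on the compact set $\calK\times\{2^{-1}\le|\eta|\le 2\}$—is what guarantees that every constant depends only on $\phi$, $\calK$, $n$, and $N$, and not on $x$, $y$, or $j$.
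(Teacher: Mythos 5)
Your proposal is correct and follows essentially the same route as the paper's proof: rescale $\xi = 2^j\eta$ using the degree-one homogeneity of $\phi$, then integrate by parts $N$ times with the standard non-stationary phase operator, using $|y|\ge 2R$ to get the lower bound $|{-y}+\nabla_\eta\phi|\gtrsim|y|$ on the (fixed, compact) support of $\psi$ and the boundedness of higher derivatives of $\phi$ on $\calK\times\{2^{-1}\le|\eta|\le 2\}$. The only cosmetic difference is bookkeeping: the paper keeps the factor $2^j$ inside the phase $F_j$ and shows every derivative of the quotient $G_{j,\ell}=\partial_{\xi_\ell}F_j/|\nabla_\xi F_j|^2$ is $O\bigl((2^j|y|)^{-1}\bigr)$, whereas you factor out $\lambda=2^j$ and handle the surplus powers of $|\nabla_\eta\Psi|^{-1}$ via $|\nabla_\eta\Psi|\ge 1$ — both devices resolve the same issue.
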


\begin{proof} 
In this proof, 
$I_{j} (x, y)$ denotes
the integral of the left hand side.
By changing variables,
\[
I_{j} (x, y) 
= 2^{j n}
\int_{\R^n} e^{-i 2^{j} ( y \cdot \xi - \phi (x, \xi) )} 
\psi (\xi) \,d\xi .
\]
For $j \in \N$ and $\ell = 1, \dots, n$,
we write
\[
F_{j} 
= 2^{j} \big( y \cdot \xi - \phi (x, \xi) \big),
\quad
G_{j, \ell} 
= \frac{ \partial_{\xi_{ \ell} } F_{j} }{| \nabla_{\xi} F_{j} |^2 } .
\]
Then, since
\[
e^{-i F_{j} } 
= i \sum_{\ell = 1}^{n} 
\frac{ \partial_{\xi_{ \ell} } F_{j} }{| \nabla_{\xi} F_{j} |^2 } \, 
\partial_{\xi_{ \ell} } e^{-i F_{j} } 
= i \sum_{\ell = 1}^{n} 
G_{j, \ell} \, 
\partial_{\xi_{ \ell} } e^{-i F_{j} } ,
\]
we have by integration by parts
\begin{align*}
I_{j} (x, y) 
&= 2^{j n}
\int e^{-i F_{j}} 
\psi \,d\xi 
= 2^{j n} 
\int 
\Big( i \sum_{\ell = 1}^{n} 
G_{j, \ell} \, 
\partial_{\xi_{ \ell} } e^{-i F_{j} } \Big)
\psi \,d\xi
\\&= 2^{j n} \sum_{\ell = 1}^{n} (- i)
\int e^{-i F_{j} } \,
\partial_{\xi_{ \ell} } \big( G_{j, \ell} \, \psi \big)\,d\xi .
\end{align*}
Repeating integration by parts $N$-times,
we have
\begin{align*}
I_{j} (x, y) 
= 
2^{j n} 
\sum_{ \substack { \ell_{1},\dots, \ell_{N} \in \{1, \dots, n \}, \\
|\alpha_{1} + \dots + \alpha_{N} +\beta|=N } }
c
\int e^{-i F_{j} }
\big( \partial_{\xi}^{\alpha_{1}} G_{j, \ell_1} \big) \dots
\big( \partial_{\xi}^{\alpha_{N}} G_{j, \ell_{N}} \big)
\big( \partial_{\xi}^{\beta} \psi \big)
\,d\xi .
\end{align*}
Now, we observe that,
for $|\alpha| \ge 2$,
\begin{align*}&
|\nabla_{\xi} F_{j}| 
= 2^{j} |y-\nabla_{\xi} \phi(x, \xi)| 
\approx 2^{j} |y|,
\\&
| \partial_{\xi}^{\alpha} F_{j} |
= 2^{j} |\partial_{\xi}^{\alpha} \phi (x, \xi) |
\lesssim 2^{j} 
\le 2^{j} |y| ,
\end{align*}
for $x \in \calK$, $|y| \ge 2R$, 
and $1/2 \le |\xi| \le 2$.
Then, we see that
for any
$\alpha \in (\N_{0})^{n}$
\[
| \partial_{\xi}^{\alpha} G_{j, \ell} | 
\lesssim ( 2^{j} |y| )^{-1}
\]
holds.
Therefore, we obtain
\begin{align*}
| I_{j} (x, y) |
&\lesssim 2^{j n} 
\int_{1/2 \le |\xi| \le 2}
( 2^{j} |y| )^{-N}
\| \psi \|_{C^{N}} \,d\xi 
\\&\approx
\|\psi\|_{C^N} (2^j)^{n-N} |y|^{-N} ,
\end{align*}
which completes the proof.
\end{proof}

\begin{proof}[Proof of Lemma \ref{lem-D3}]
From the property of $\zeta$ stated in Notation \ref{notation}, 
we have 
\begin{align*}
\zeta (\xi) \theta (2^{-j} \xi) 
=
\sum_{k=1}^{j+1} 
\psi (2^{-k} \xi) \theta (2^{-j}\xi) .
\end{align*}
If $|y| \ge 2R$ and $1 \le k\le j+1$, then 
Lemma \ref{lem-220925} gives 
\begin{align*}
\left| \int_{\R^n} e^{-i y \cdot \xi + i \phi (x, \xi)} 
\psi (2^{-k}\xi) \theta (2^{-j}\xi) \,d\xi \right| 
&
\lesssim 
(2^{k}) ^{n-N}
|y|^{-N}
\left\| 
\psi (\cdot ) \theta (2^{k-j}\cdot )
\right\|_{C^{N}} 
\\
&
\lesssim 
(2^{k}) ^{n-N}
|y|^{-N}
\left\| 
\theta 
\right\|_{C^{N}} . 
\end{align*}
Hence, if $N>n$, then taking sum over $k$, 
we complete the proof. 
\end{proof}


\end{document}